\theoremstyle{plain}
\newtheorem{theorem}{Theorem}[section]
\newtheorem{corollary}[theorem]{Corollary}
\newtheorem{conjecture}[theorem]{Conjecture}
\newtheorem{lemma}[theorem]{Lemma}
\newtheorem{proposition}[theorem]{Proposition}
\theoremstyle{definition}
\theoremstyle{remark}
\newtheorem*{remark}{Remark}
\numberwithin{equation}{section}
\newcommand{\N}{\mathbb N}
\newcommand{\Z}{\mathbb Z}
\newcommand{\C}{\mathbb C}
\newcommand{\rank}{\operatorname{rank}}
\newcommand{\crank}{\operatorname{crank}}
\newcommand{\Arg}{\operatorname{Arg}}
\newcommand{\Log}{\operatorname{Log}}
\begin{document}
\allowdisplaybreaks


\begin{abstract}

We study the asymptotic behavior of the rank statistic for unimodal sequences. We use analytic techniques involving asymptotic expansions in order to prove asymptotic formulas for the moments of the rank. Furthermore, when appropriately normalized, the values of the unimodal rank asymptotically follow a logistic distribution. We also prove similar results for Durfee unimodal sequences and semi-strict unimodal sequences, with the only major difference being that the (normalized) rank for semistrict unimodal sequences has a distributional limit of a point mass probability distribution.

\end{abstract}

\title{The asymptotic distribution of the rank for unimodal sequences}
\author[K. Bringmann]{Kathrin Bringmann}
\address{University of Cologne, Faculty of Mathematical and Natural Sciences, Mathematical Institute, Weyertal 86-90, 50931 Cologne, Germany}
\email{kbringma@math.uni-koeln.de}

\author[C. Jennings-Shaffer]{Chris  Jennings-Shaffer}
\address{University of Cologne, Faculty of Mathematical and Natural Sciences, Mathematical Institute, Weyertal 86-90, 50931 Cologne, Germany}
\email{chrisjenningsshaffer@gmail.com}

\author[K. Mahlburg]{Karl Mahlburg}
\address{Department of Mathematics, Louisiana State University, Baton Rouge, LA 70803, USA}
\email{mahlburg@math.lsu.edu}

\thispagestyle{empty} \vspace{.5cm}
\maketitle

\section{introduction and statement of results}

There is an extensive literature on the study of integer unimodal sequences: see \cite{Stan89} for a survey of combinatorial and other applications of such sequences, and \cite{BringmannMahlburg2} for a history of asymptotic results for the enumeration functions. More recently, there has been further interest in the asymptotic and probabilistic aspects of statistics for unimodal sequences. There is a much lengthier discussion in the authors' recent article \cite{BJSMR1}, which also includes results for strongly unimodal sequences. This article is intended to address the asymptotic behavior of statistics for several families of unimodal sequences that have not been studied previously, including, most importantly,
 the rank of
unimodal sequences with marked summits.

A sequence of positive integers is a {\it unimodal sequence} of size $n$ if it is of the form
\begin{equation}
\label{E:unimodDefn}
a_1 \leq a_2 \leq \cdots \leq a_r \leq \overline{c} \geq b_s \geq \cdots \geq b_1,
\end{equation}
with sum $a_1 + \cdots + a_r + c + b_s + \cdots + b_1 = n$. Let $\mathcal{U}(n)$ denote the set of unimodal sequences (with designated peaks) of size $n$, and let $u(n) := |\mathcal{U}(n)|$ be the enumeration function. This function has appeared previously as $ss(n)$ in \cite{BringmannMahlburg2}, as $\sigma\sigma(n)$ in Section 3 of \cite{And84}, as $v(n)$ in Section 2.5 of \cite{Sta11}, and as $X(n-1)$ in \cite{And12}. The mark on the part $c$ indicates that if the largest part is repeated, the sequences may be further distinguished by specifying the location of the ``peak''.  For example, $u(3)=6$, as the unimodal sequences are $\{\overline{3}\}$, $\{\overline{2},1\}$, $\{1,\overline{2}\}$, $\{\overline{1},1,1\}$,
$\{1,\overline{1},1\}$, and $\{1,1,\overline{1}\}$.
The generating function is given by
\begin{align*}
U(q) := \sum_{n\ge0} u(n)q^n
=
\sum_{n\ge0}\frac{q^n}{(q)_n^2}.
\end{align*}

Throughout the article we use the standard $q$-Pochhammer notation,
which is defined for $n\in\N_0\cup\{\infty\}$ by
\begin{gather*}
(a)_n := (a;q)_n := \prod_{j=0}^{n-1} \left(1-aq^j\right)
,\qquad\qquad
(a_1,a_2,\dotsc,a_k)_n := (a_1)_n(a_2)_n\dotsb(a_k)_n
.
\end{gather*}

The \emph{rank} of a unimodal sequence is the number of parts after the peak minus the number of
parts before the peak. As such, it is direct to see that the generating function is
\begin{align*}
U(\zeta;q)
&=\sum_{\substack{n\ge0\\ m\in\mathbb{Z}}} u(m,n) \zeta^m q^n
=\sum_{n\ge0} \frac{q^n}{(\zeta q,\zeta^{-1}q)_n},
\end{align*}
where $u(m,n)$\footnote{Unfortunately, there is no strongly established convention for the usage of $u(m,n)$ and $U(\zeta;q)$, which have been used to denote both unimodal sequences and {\it strongly unimodal sequences} (in which all inequalities in \eqref{E:unimodDefn} are strict).} denotes the number of unimodal sequences, with designated peaks, of
size $n$ and with rank $m$.
We denote the moments, absolute moments, and moment generating  functions, respectively, by
\begin{align*}
u_{k}(n) := \sum_{m\in\mathbb{Z}}m^{k} u(m,n)
,\qquad
u_{k}^+(n) := \sum_{m\in\mathbb{Z}}|m|^{k} u(m,n)
,\qquad
\mathbb{U}_{k}(q) := \sum_{n\ge0} u_{k}(n)q^n.
\end{align*}

The next family that we consider has a generating function whose analytic behavior is very similar to $U(\zeta;q)$, although this is not immediately evident from the combinatorial definition.
We define a \emph{Durfee unimodal sequence} to be a unimodal sequence with a designated peak as in \eqref{E:unimodDefn}, and
that also satisfies  $s \leq c-k$ where $k$ is the size of the ``Durfee square'' of the partition $(a_1,a_2,\dotsc,a_r)$ (this is the largest $k$ such $a_{r-k+1} \geq k$).  Let $\mathcal{V}(n)$ denote the set of Durfee unimodal sequences of size $n$, with enumeration function $v(n)$.  This function was introduced in \cite{KimLovejoy2}, where the enumeration function was denoted by $V(n)$.
Again the rank is the number of parts after the peak minus the number of parts before
the peak. Let $v(m,n)$ denote the number of Durfee unimodal sequences of size $n$ with rank $m$.
The generating function is given by (see \cite[Proposition 3.1]{KimLovejoy2})
\begin{gather*}
V(\zeta;q)
:= \sum_{\substack{n\ge0 \\ m\in\mathbb{Z}}} v(m,n)\zeta^m q^n
	=
	\sum_{n\ge0} \frac{(q^{n+1})_nq^n}{(\zeta q,\zeta^{-1} q)_n}
.
\end{gather*}
As above, we also introduce notation for the moments, namely
\begin{equation*}
v_k(n) := \sum_{m\in\mathbb{Z}} m^k v(m,n)
,\qquad
v_k^+(n) := \sum_{m\in\mathbb{Z}} |m|^k v(m,n)
,\qquad
\mathbb{V}_k(q) := \sum_{n\ge0} v_k(n)q^n.
\end{equation*}

The third and final family that we consider are \emph{semi-strict} unimodal sequences, which are of the form
\begin{gather*}
a_1 < a_2 < \dotsc a_r < c > b_s \ge b_{s-1} \ge \dotsc \ge b_1.
\end{gather*}
Let $\mathcal{DM}(n)$ denote the set of semi-strict unimodal sequences of size $n$, with enumeration function ${\rm dm}(n),$ as in \cite{BringmannMahlburg2}.  This function was introduced in \cite{And12}, where the enumeration function was written as $x_m(n)$.
For example, ${\rm dm}(4)=5$ from the sequences
$\{4\}$, $\{1,3\}$, $\{3,1\}$, $\{1,2,1\}$, and $\{2,1,1,1\}$.
The generating function is given by
\begin{gather*}
D(q) := \sum_{n\ge0} {\rm dm}(n)q^n = \sum_{n\ge0} \frac{(-q)_n}{(q)_n}q^{n+1}.
\end{gather*}
The rank of such a sequence is again defined as the number of parts after
the peak minus the number of parts before the peak. Let ${\rm dm}(m,n)$ denote the
number of semi-strict unimodal sequences of size $n$ with rank $m$.
The generating function
of this rank is given by

\begin{gather*}
D(\zeta;q) := \sum_{n\ge0} {\rm dm}(m,n)\zeta^m q^n = \sum_{n\ge0} \frac{\left(-\zeta^{-1} q\right)_n}{(\zeta q)_n}q^{n+1}.
\end{gather*}
Set
\begin{equation*}
{\rm dm}_k(n) := \sum_{m\in\mathbb{Z}} m^k {\rm dm}(m,n)
,\quad
{\rm dm}_k^+(n) := \sum_{m\in\mathbb{Z}} |m|^k {\rm dm}(m,n)
,\quad
\mathbb{D}_k(q) := \sum_{n\ge0} {\rm dm}_k(n)q^n.
\end{equation*}

Our first result gives the asymptotic behavior of the moments for the rank functions.
We note that the odd moments $u_{2k+1}(n)$ and $v_{2k+1}(n)$ are trivially all zero, since it is clear by symmetry that $u(-m,n)=u(m,n)$ and $v(-m,n)=v(m,n)$. We denote the $\ell$-th Bernoulli polynomial by $B_\ell(x)$.

\begin{theorem}\label{thm:AsymptoticForMoments}
Suppose that $k\in\N_0$. Then we have the following asymptotic formulas.
\begin{enumerate}[leftmargin=*, label={\rm (\arabic*)}]
	\item \label{thm:Asymp:u} We have, as $n\to \infty$,
	\begin{equation*}
	u_{2k}(n) \sim (-1)^k 2^{2k-3} 3^{k-\frac 34} B_{2k}\left(\frac12\right)
	n^{k-\frac 54} e^{2\pi \sqrt{\frac{n}{3}}}.
	\end{equation*}
	\item We have, as $n\to \infty$,
	\begin{equation*}
	v_{2k}(n) \sim (-1)^k 2^{2k-2} 3^{k-\frac74} B_{2k}\left(\frac12\right) n^{k-\frac 54} e^{2\pi \sqrt{\frac{n}{3}}}.
	\end{equation*}
	\item \label{thm:Asymp:semistrict}
We have, as $n\to \infty$,
	\begin{equation*}
	{\rm dm}_k(n) \sim \frac{1}{16 \pi^k} \log(n)^k n^{\frac{k}{2}-1}	e^{\pi \sqrt{n}}
	.
	\end{equation*}
\end{enumerate}
\end{theorem}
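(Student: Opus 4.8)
The plan is to treat all three parts by the same two–step scheme: first determine the behaviour of the two–variable generating function as $q\to1^-$, and then convert this into coefficient asymptotics by the saddle–point method (Wright's circle method). Throughout I would set $q=e^{-t}$ and $\zeta=e^{w}$, so that the moments appear as Taylor coefficients in $w$:
\begin{equation*}
\mathbb{U}_{2k}(q)=\partial_w^{2k}U\!\left(e^{w};q\right)\big|_{w=0},\qquad u_{2k}(n)=[q^n]\,\mathbb{U}_{2k}(q),
\end{equation*}
and analogously for $v_{2k}(n)$ (via $\partial_w^{2k}$) and for ${\rm dm}_k(n)$ (via $\partial_w^{k}$, since the semi–strict rank is not symmetric). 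Everything then hinges on an asymptotic expansion of $U(e^{w};e^{-t})$ that is \emph{uniform} in a neighbourhood of $w=0$ as $t\to0^+$, after which the even moments are read off by differentiating $2k$ times in $w$.

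First I would analyse $U(e^{w};e^{-t})$. Because the dominant contribution to the defining sum comes from large $n$, where $(\zeta q,\zeta^{-1}q)_n$ may be replaced by $(\zeta q,\zeta^{-1}q)_\infty$, the controlling object is a theta quotient whose behaviour as $q\to1$ is governed by the modular transformation of the Jacobi theta function (equivalently, by an Euler–Maclaurin expansion in which the singular terms near the lower endpoint are tracked). This produces an expansion of the shape
\begin{equation*}
U\!\left(e^{w};e^{-t}\right)\sim P(t)\,\exp\!\left(\frac{\pi^2}{3t}\right)\cdot\frac{\pi w/t}{\sin(\pi w/t)},
\end{equation*}
uniformly for $w$ in a shrinking neighbourhood of $0$, where $P(t)\asymp t$ is an explicit algebraic prefactor and the scale $\pi^2/3=2\operatorname{Li}_2(1)$ fixes the exponential. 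The decisive point is that the entire $w$–dependence collapses onto a function of $w/t$, and the Taylor coefficients of $\tfrac{\pi x}{\sin(\pi x)}$ are exactly $(-1)^k 4^k\pi^{2k} B_{2k}(\tfrac12)/(2k)!$; this single factor simultaneously produces the polynomial $B_{2k}(\tfrac12)$, the sign $(-1)^k$, and the logistic limit law announced in the abstract. The Durfee case is handled identically once the factor $(q^{n+1})_n=(q)_{2n}/(q)_n$ is absorbed into the saddle in $n$: it alters only $P(t)$ and the saddle, which accounts for the change from $2^{2k-3}3^{k-3/4}$ to $2^{2k-2}3^{k-7/4}$, while leaving the $\sin$–factor, the exponential $e^{2\pi\sqrt{n/3}}$, and hence $B_{2k}(\tfrac12)$ untouched.

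With the uniform expansion in hand, differentiating $2k$ times and setting $w=0$ gives $\mathbb{U}_{2k}(q)\sim P(t)\,e^{\pi^2/(3t)}\,c_k\,t^{-2k}$ with $c_k=(-1)^k4^k\pi^{2k} B_{2k}(\tfrac12)$, and $u_{2k}(n)$ follows from a saddle–point evaluation of $\tfrac{1}{2\pi i}\oint q^{-n-1}\mathbb{U}_{2k}(q)\,dq$. The saddle of $\pi^2/(3t)+nt$ sits at $t^\ast=\pi/\sqrt{3n}$ and produces $e^{2\pi\sqrt{n/3}}$; the factor $t^{-2k}$ evaluates to $(t^\ast)^{-2k}=3^kn^k\pi^{-2k}$, which, together with $P(t)\asymp t$, upgrades the base order $n^{-3/4}$ of $[q^n]e^{\pi^2/(3t)}$ to $n^{k-5/4}$; collecting constants yields exactly $(-1)^k2^{2k-3}3^{k-3/4}B_{2k}(\tfrac12)$, and the analogous computation gives (2). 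A standard bound showing that the arcs away from $q=1$ are exponentially negligible completes parts (1) and (2).

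The semi–strict case is genuinely different and is where I expect the main difficulty. Here the relevant factor is $(-\zeta^{-1}q)_n/(\zeta q)_n$, whose limiting exponent $\operatorname{Li}_2(\zeta)-\operatorname{Li}_2(-\zeta^{-1})$ equals $\pi^2/4$ at $\zeta=1$, giving $e^{\pi\sqrt n}$ through the saddle $t^\ast=\pi/(2\sqrt n)$. Its $w$–derivative $-\log(1-e^{w})-\log(1+e^{-w})$, however, has a logarithmic singularity as $w\to0$: for the symmetric products of $U$ and $V$ the two logarithmic singularities cancel, which is precisely why those moments are analytic in $w$ (hence the clean $\sin$–factor), whereas here there is no cancellation. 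Consequently the transformation yields, in place of the $\sin$–factor, an essentially exponential factor $\exp(\mu(t)w+\cdots)$ whose linear coefficient grows like $\mu(t)\asymp t^{-1}\log(1/t)$, with all higher cumulants of lower order; at the saddle $\mu(t^\ast)\asymp\sqrt n\,\log n$. Each $w$–derivative therefore contributes one factor of $\mu(t)$ rather than a pair, which explains both the absence of a Bernoulli polynomial and the factor $\log(n)^k$ (with no parity restriction), and shows that the normalized rank concentrates at its mean, i.e.\ converges to a point mass. The hardest part throughout is establishing these expansions with enough uniformity in $w$ to justify the $2k$–fold (respectively $k$–fold) differentiation, and, for the semi–strict family, isolating the logarithmic growth $\mu(t)\asymp t^{-1}\log(1/t)$ and controlling the surrounding error terms; once the uniform expansions are secured, the saddle–point extraction is routine.
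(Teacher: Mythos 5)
Your overall architecture is viable and genuinely different from the paper's: you propose a uniform two-variable expansion of $U(e^w;e^{-t})$ near $q=1$ followed by a saddle-point extraction of coefficients, whereas the paper works term by term with the moment generating functions, applies Euler--Maclaurin summation to explicit partial theta sums, and then invokes Ingham's Tauberian theorem (using the monotonicity $u_{2k}(n)\le u_{2k}(n+1)$, obtained by adding one to the peak) so that no minor-arc analysis is ever needed. Your predicted shapes are correct: the factor $\tfrac{\pi w/t}{\sin(\pi w/t)}$ does encode the crank asymptotics and hence $(-1)^k(2\pi)^{2k}B_{2k}(\tfrac12)$, your constants for (1) and (2) check out against the stated formulas, and your identification of the uncancelled logarithmic singularity in the semi-strict case, giving $\mu(t)\asymp t^{-1}\log(1/t)$ and hence $\log(n)^k$ with concentration at the mean, matches the paper's analysis of $L(1;e^{-w})\sim -\Log(w)/w$ and its Fa\`a di Bruno-type expansion of $\partial_z^k D^*$ essentially exactly.

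The genuine gap is in the central step for parts (1) and (2): the uniform expansion of $U(e^w;e^{-t})$ is asserted, and the one mechanism you offer for it --- replacing $(\zeta q,\zeta^{-1}q)_n$ by $(\zeta q,\zeta^{-1}q)_\infty$ in the defining sum --- does not produce it. Taken literally, that replacement gives $\tfrac{1}{1-q}\cdot\tfrac{1}{(\zeta q,\zeta^{-1}q)_\infty}$, which at $\zeta=1$ is $\sim\tfrac{1}{2\pi}e^{\pi^2/(3t)}$, whereas the true behaviour is $\sim\tfrac{t}{4\pi}e^{\pi^2/(3t)}$: the tail factors $(\zeta q^{n+1},\zeta^{-1}q^{n+1})_\infty$ are not close to $1$ in the dominant range $n\asymp 1/t$, and they suppress the sum from order $1/t$ to order $1$. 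The paper handles this with the exact Kim--Lovejoy identity $U(\zeta;q)=C(\zeta;q)G_1(\zeta;q)+H_1(\zeta;q)$, in which the alternating partial theta series inside $G_1$ is $O(1)$ as $t\to0$ by Corollary \ref{Cor:EulerMaclaurinBooleShift}; one must then also check that the $\zeta$-dependence of $G_1$ and $H_1$ contributes only subdominant cross terms $\binom{2k}{2j}C_{2j}(q)\left[\partial_z^{2(k-j)}(G_1(\zeta;q))\right]_{z=0}$ for $j<k$, which is exactly the content of the estimate $F_j(w)\sim\tfrac12E_{2k-2j}(\tfrac12)w^{k-j}$. Without such an identity (or an equivalent rigorous treatment of the tail sum, including its $w$-dependence), your $P(t)\asymp t$ and the purity of the $\sin$-factor are unjustified. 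A secondary issue: since none of these generating functions is modular, the ``standard'' bound on the arcs away from $q=1$ that you defer to is not standard; this is precisely why the paper routes the coefficient extraction through Ingham's theorem and monotone coefficients rather than the circle method.
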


Following the probabilistic Method of Moments, we use the above asymptotic formulas to show that each of the ranks has a limiting distribution when appropriately normalized.

\begin{proposition}\label{prop:URankDistribution}
\begin{enumerate}[leftmargin=*, label={\rm (\arabic*)}]
\item
\label{prop:URankDistribution:unimodal}
The normalized rank of unimodal sequences is asymptotically distributed according to the logistic distribution with mean zero and scale $\frac{1}{\pi}$. In particular,
\begin{align*}
\lim_{n\rightarrow\infty} \frac{1}{u(n)}
\left|\left\{
	\sigma\in\mathcal{U}(n) : \frac{\rank(\sigma)}{\sqrt{3n}} \le x
\right\}\right|
=
\frac{1}{1+e^{-\pi x}}.
\end{align*}
\item
\label{prop:URankDistribution:Durfee}
The normalized rank of Durfee unimodal sequences is asymptotically distributed according to the logistic distribution with mean zero and scale $\frac{1}{\pi}$. In particular,
\begin{align*}
\lim_{n\rightarrow\infty} \frac{1}{v(n)}
\left|\left\{
	\sigma\in\mathcal{V}(n) : \frac{\rank(\sigma)}{\sqrt{3n}} \le x
\right\}\right|
=
\frac{1}{1+e^{-\pi x}}.
\end{align*}
\item
\label{prop:URankDistribution:semistrict}
The normalized rank of semi-strict unimodal sequences is asymptotically distributed according to a point mass distribution at one. In particular,
\begin{align*}
\lim_{n\rightarrow\infty} \frac{1}{{\rm dm}(n)}
\left|\left\{
	\sigma\in\mathcal{DM}(n) : \frac{\rank(\sigma)}{\frac{\sqrt{n} \log(n)}{\pi}} \le x
\right\}\right|
=
\begin{cases}
0 \quad &\text{if } x < 1, \\
1 \quad &\text{if } x \geq 1.
\end{cases}
\end{align*}
\end{enumerate}
\end{proposition}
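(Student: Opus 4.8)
The plan is to invoke the classical Method of Moments (the Fr\'echet--Shohat theorem): if the moments of a sequence of random variables converge to the moments of a limiting distribution that is itself determined by its moments, then the random variables converge in distribution at every continuity point of the limiting distribution function. In each case I would equip the relevant set of sequences of size $n$ with the uniform probability measure. Then for part \ref{prop:URankDistribution:unimodal} the $k$-th moment of the normalized rank $\frac{\rank(\sigma)}{\sqrt{3n}}$ is exactly $\frac{u_k(n)}{u(n)(3n)^{k/2}}$, and analogously the relevant quantities are $\frac{v_k(n)}{v(n)(3n)^{k/2}}$ for part \ref{prop:URankDistribution:Durfee} and $\frac{{\rm dm}_k(n)}{{\rm dm}(n)}\bigl(\frac{\pi}{\sqrt{n}\log(n)}\bigr)^{k}$ for part \ref{prop:URankDistribution:semistrict}. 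Each of these is a ratio to which Theorem~\ref{thm:AsymptoticForMoments} applies directly, with the normalizing powers of $n$ chosen precisely so that the exponential and polynomial factors cancel.

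For part \ref{prop:URankDistribution:unimodal} I would first note that the odd moments vanish identically, since $u(-m,n)=u(m,n)$. For the even moments, dividing the asymptotic for $u_{2k}(n)$ by that for $u(n)=u_0(n)$ (the $k=0$ case, using $B_0(\tfrac12)=1$) and then by $(3n)^k$ leaves only the constants, giving
\begin{equation*}
\lim_{n\to\infty}\frac{u_{2k}(n)}{u(n)(3n)^{k}}=(-1)^k 2^{2k}B_{2k}\!\left(\tfrac12\right).
\end{equation*}
The essential step is to recognize these as the moments of the target law. The logistic distribution with mean $0$ and scale $\frac1\pi$ has moment generating function $\frac{t}{\sin t}$ on $|t|<\pi$, and using $B_{2k}(\tfrac12)=(2^{1-2k}-1)B_{2k}$ together with the Bernoulli-polynomial generating function one verifies
\begin{equation*}
\frac{t}{\sin t}=\sum_{k\ge0}\frac{(-1)^k 2^{2k}B_{2k}\!\left(\tfrac12\right)}{(2k)!}\,t^{2k},
\end{equation*}
so that its $2k$-th moment is exactly $(-1)^k 2^{2k}B_{2k}(\tfrac12)$, matching the displayed limit. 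Part \ref{prop:URankDistribution:Durfee} is identical in structure: the constants $2^{2k-2}3^{k-7/4}$ for $v_{2k}(n)$ against $2^{-2}3^{-7/4}$ for $v(n)$ again produce the ratio $(-1)^k 2^{2k}B_{2k}(\tfrac12)$ after dividing by $(3n)^k$, so the same logistic law emerges.

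For part \ref{prop:URankDistribution:semistrict} the sequences are not symmetric, so I would retain all moments. Dividing the asymptotic for ${\rm dm}_k(n)$ by that for ${\rm dm}(n)={\rm dm}_0(n)$ and by $\bigl(\frac{\sqrt{n}\log(n)}{\pi}\bigr)^{k}$ yields $\lim_{n\to\infty}\mathbb{E}[Z_n^k]=1$ for every $k\ge0$, where $Z_n$ denotes the normalized rank. Since $1,1,1,\dots$ are precisely the moments of the point mass $\delta_1$, the Method of Moments gives convergence to $\delta_1$; equivalently $\mathrm{Var}(Z_n)\to 1-1=0$, so $Z_n\to 1$ in probability, which produces the stated step-function limit at every continuity point $x\neq1$.

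Finally, to legitimize the Method of Moments in each case I would check moment-determinacy of the limit: the logistic distribution is determined by its moments because its moment generating function is finite, indeed analytic, on the neighborhood $|t|<\pi$ of the origin, while $\delta_1$ is trivially moment-determinate as it has bounded support. I do not expect a genuine analytic obstacle here, since the heavy lifting is already done in Theorem~\ref{thm:AsymptoticForMoments}; the main content is the careful bookkeeping of constants through the divisions and the identification of the sequence $(-1)^k 2^{2k}B_{2k}(\tfrac12)$ with the logistic law via the $\frac{t}{\sin t}$ identity for Bernoulli polynomials at $\tfrac12$. The one delicate point is the behavior at the discontinuity $x=1$ in part \ref{prop:URankDistribution:semistrict}, where weak convergence alone does not determine the limiting value; there one simply appeals to the right-continuous convention for the distribution function of $\delta_1$.
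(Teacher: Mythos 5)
Your proposal is correct and follows essentially the same route as the paper: divide the moment asymptotics of Theorem \ref{thm:AsymptoticForMoments} by the $k=0$ case, normalize, and apply the Method of Moments, identifying the limiting moment sequences with the logistic law (parts (1) and (2)) and the point mass $\delta_1$ (part (3)). The only cosmetic difference is that you verify the logistic moments via the $\frac{t}{\sin t}$ moment generating function and the Bernoulli generating function, whereas the paper rewrites the limit as $(2^k-2)|B_k|$ and cites a reference for the logistic moments.
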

\begin{remark}
The appearance of the logistic distribution in part \ref{prop:URankDistribution:unimodal} is not surprising, as this naturally arises as the difference between two independent extreme value distributions (see \cite{DJR13} for a related example). As in \cite[Proposition 2.5.1]{Sta11}, unimodal sequences are closely related to ordered pairs of partitions (up to an inclusion-exclusion argument), and the rank of the sequence then corresponds to the difference between the number of parts in the two partitions. Furthermore, if the partitions were independent, then \cite[Theorem 1.1]{EL41} would show that the normalized number of parts in each partition has a (weak) limit that is an extreme value distribution (see \cite[page 195]{Billingsley1}).
\end{remark}

\begin{remark}
The point mass distribution in part \ref{prop:URankDistribution:semistrict} can also be interpreted as the statement that for large $n$, ``almost all'' semi-strict unimodal sequences of $n$ have rank that is approximately $\frac{\sqrt{n} \log(n)}{\pi}.$
However, it would also be interesting to obtain the more refined distribution centered around this average value. In particular, following the example of \cite[Theorem 1.1]{EL41}, one might expect to find a limiting distribution $F(x)$ such that
\begin{equation*}
\lim_{n \to \infty} \frac{1}{{\rm dm}(n)} \left| \left\{\sigma \in \mathcal{DM}(n) \; : \; \frac{\rank(\sigma) - \frac{\sqrt{n} \log(n)}{\pi}}{\sqrt{n}} \leq x \right\} \right| \to F(x).
\end{equation*}
Such a result is not easily accessible using the techniques in this paper, as all of our calculations are instead for moments (and thus distributions) that are centered at zero.
\end{remark}

Our final result highlights an additional application of the Method of Moments, as we use the distributions from Proposition \ref{prop:URankDistribution} in order to determine asymptotic formulas for the absolute moments.
\begin{corollary}\label{cor:AbsoluteMoments}
Assume that $k\in \N_0$.
\begin{enumerate}[leftmargin=*, label={\rm (\arabic*)}]
	\item \label{cor:AbsoluteMoments:unimodal}
We have, as $n \to \infty,$
	\begin{equation*}
	u_k^+(n) \sim \frac{ 3^{\frac{k}{2}-\frac34} \left(1-2^{1-k}\right) k! \zeta(k)  }
	{4\pi^{k}} 	
	n^{\frac{k}{2}-\frac54}e^{2\pi \sqrt{\frac{n}{3}}}.
	\end{equation*}
	\item \label{cor:AbsoluteMoments:Durfee}
We have, as $n \to \infty,$
	\begin{equation*}
	v_k^+(n) \sim \frac{ 3^{\frac{k}{2}-\frac74} \left(1-2^{1-k}\right) k! \zeta(k)  }
	{2\pi^{k}} 	
	n^{\frac{k}{2}-\frac54}e^{2\pi \sqrt{\frac{n}{3}}}.
	\end{equation*}
	\item \label{cor:AbsoluteMoments:semistrict}
We have, as $n \to \infty,$
	\begin{equation*}
	{\rm dm}_k^+(n)\sim {\rm dm}_k(n).
	\end{equation*}
\end{enumerate}
\end{corollary}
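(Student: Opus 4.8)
The plan is to apply the Method of Moments in the form that converts the distributional limits of Proposition \ref{prop:URankDistribution} into convergence of \emph{absolute} moments. For the unimodal family, let $\mathbb{E}_n$ denote expectation under the uniform measure on $\mathcal{U}(n)$ and set $X_n := \frac{\rank(\sigma)}{\sqrt{3n}}$. By definition,
\[
\mathbb{E}_n\!\left[|X_n|^k\right] = \frac{1}{u(n)(3n)^{k/2}}\sum_{m\in\mathbb{Z}}|m|^k u(m,n) = \frac{u_k^+(n)}{u(n)(3n)^{k/2}},
\]
while $\mathbb{E}_n[X_n^{2j}] = u_{2j}(n)/(u(n)(3n)^j)$. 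By part \ref{prop:URankDistribution:unimodal} of Proposition \ref{prop:URankDistribution} we have $X_n \Rightarrow L$, where $L$ is logistic with mean $0$ and scale $\tfrac1\pi$. The first step is to upgrade this weak convergence to convergence of the $k$-th absolute moment; the second is to evaluate $\mathbb{E}[|L|^k]$ explicitly; and the third is to unwind the normalization using the $k=0$ case of Theorem \ref{thm:AsymptoticForMoments}, which gives $u(n)\sim \frac{1}{8\cdot 3^{3/4}}n^{-5/4}e^{2\pi\sqrt{n/3}}$. The Durfee case is identical after replacing $u$ by $v$, with $v(n)\sim \frac{1}{4\cdot 3^{7/4}}n^{-5/4}e^{2\pi\sqrt{n/3}}$.

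The main obstacle is that weak convergence alone does not force convergence of moments, so I would establish \emph{uniform integrability} of $\{|X_n|^k\}$. This follows from the uniform boundedness of a higher moment: for any even integer $2j>k$ the even moments $\mathbb{E}_n[X_n^{2j}]=\mathbb{E}_n[|X_n|^{2j}]$ converge (to $(-1)^j 2^{2j}B_{2j}(\tfrac12)$) by Theorem \ref{thm:AsymptoticForMoments} together with the asymptotic for $u(n)$, and hence are bounded in $n$. Since $2j>k$, a bound $|X_n|^{k(1+\varepsilon)}\le 1+|X_n|^{2j}$ with $2j\ge k(1+\varepsilon)$ yields $\sup_n \mathbb{E}_n[|X_n|^{k(1+\varepsilon)}]<\infty$ for small $\varepsilon>0$, which gives the uniform integrability. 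Because $x\mapsto |x|^k$ is continuous, uniform integrability combined with $X_n\Rightarrow L$ forces $\mathbb{E}_n[|X_n|^k]\to \mathbb{E}[|L|^k]$.

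To evaluate the limit I would expand the logistic density and integrate term by term, using $\tfrac{e^{-t}}{(1+e^{-t})^2}=\sum_{r\ge1}(-1)^{r-1}r e^{-rt}$ and $\sum_{r\ge1}(-1)^{r-1}r^{-k}=(1-2^{1-k})\zeta(k)$:
\[
\mathbb{E}[|L|^k] = 2\int_0^\infty x^k\,\frac{\pi e^{-\pi x}}{(1+e^{-\pi x})^2}\,dx
= \frac{2}{\pi^k}\sum_{r\ge1}(-1)^{r-1}r\int_0^\infty t^k e^{-rt}\,dt
= \frac{2\,k!\,(1-2^{1-k})\zeta(k)}{\pi^k}.
\]
Combining the three steps then gives $u_k^+(n)\sim u(n)(3n)^{k/2}\mathbb{E}[|L|^k]\sim \frac{3^{\frac k2-\frac34}(1-2^{1-k})k!\,\zeta(k)}{4\pi^k}\,n^{\frac k2-\frac54}e^{2\pi\sqrt{n/3}}$, and the same computation with $v(n)$ yields part \ref{cor:AbsoluteMoments:Durfee}. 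For even $k$ one has $u_k^+(n)=u_k(n)$ outright, and the agreement of the two constants then serves as a consistency check via the identity $B_{2j}(\tfrac12)=(2^{1-2j}-1)B_{2j}$ and the evaluation $\zeta(2j)=\frac{(-1)^{j+1}(2\pi)^{2j}B_{2j}}{2(2j)!}$.

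For the semi-strict family the same scheme applies, but the limit is simpler. Set $c_n=\frac{\sqrt n\log n}{\pi}$ and $Y_n := \rank(\sigma)/c_n$, so that $\mathbb{E}_n[Y_n^k]={\rm dm}_k(n)/({\rm dm}(n)c_n^k)$ and $\mathbb{E}_n[|Y_n|^k]={\rm dm}_k^+(n)/({\rm dm}(n)c_n^k)$. By part \ref{prop:URankDistribution:semistrict} of Proposition \ref{prop:URankDistribution}, $Y_n\Rightarrow \delta_1$, and the even moments $\mathbb{E}_n[Y_n^{2j}]=\mathbb{E}_n[|Y_n|^{2j}]\to 1$ by part \ref{thm:Asymp:semistrict} of Theorem \ref{thm:AsymptoticForMoments} (with ${\rm dm}(n)\sim\frac{1}{16}n^{-1}e^{\pi\sqrt n}$), which again furnishes uniform integrability of $\{|Y_n|^k\}$. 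Hence both $\mathbb{E}_n[Y_n^k]\to 1$ and $\mathbb{E}_n[|Y_n|^k]\to 1$, so ${\rm dm}_k^+(n)\sim {\rm dm}(n)c_n^k\sim {\rm dm}_k(n)$, which is part \ref{cor:AbsoluteMoments:semistrict}. For even $k$ this is trivial since ${\rm dm}_k^+(n)={\rm dm}_k(n)$; the real content is that the contribution $\sum_{m<0}|m|^k{\rm dm}(m,n)$ of negative ranks is negligible compared with ${\rm dm}_k(n)$, which is exactly the statement provided by the concentration of the rank at the positive value $c_n$.
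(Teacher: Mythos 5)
Your proposal is correct and follows essentially the same route as the paper: weak convergence from Proposition \ref{prop:URankDistribution} is upgraded to convergence of absolute moments via a uniformly bounded higher even moment (you phrase this as uniform integrability, the paper cites the corollary to Theorem 25.12 of Billingsley, which is the same criterion), and then the limiting absolute moment of the logistic distribution is evaluated (you compute it directly, the paper cites \cite{JKB}). The semi-strict case is likewise handled exactly as in the paper, by noting that the moments and absolute moments of the point mass at $1$ coincide.
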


The remainder of the paper is structured as follows.
In Section \ref{prefacts}, we recall some preliminary facts on the Dedekind $\eta$-function, Ingham's theorem,
the  Euler-Maclaurin summation formula, as well as some combinatorial statistics. In Section \ref{unimodal} we prove the asymptotic results for the rank of unimodal sequences.  This is followed by additional results on the log-concavity of the unimodal rank in  Section \ref{asymp}. Section \ref{durfee} is dedicated to proving our results for Durfee unimodal sequences, and we conclude with semi-strict unimodal sequences in Section \ref{semi}.

\section{Preliminary facts}{\label{prefacts}

\subsection{Asymptotic results.}

We  require the asymptotic behavior of {\it Dedekind's $\eta$-function}
$\eta(\tau):=q^\frac1{24}\prod\limits_{n=1}^\infty(1-q^n)$ $(q:=e^{2\pi i\tau},\tau\in\mathbb{H})$,
which follows directly from its modular transformation  \cite[Theorem 3.1]{Apo90}
\begin{equation}\label{Pochas}
\left(e^{-w};e^{-w}\right)_\infty \sim \sqrt{\frac{2\pi}{w}} e^{-\frac{\pi^2}{6w}} \qquad \mbox{as }w\to 0.
\end{equation}
Here the limit is taken in any region $|\!\Arg(w)|<\theta$, for fixed $\theta<\frac{\pi}{2}$. Throughout the article, almost all asymptotic statements for $w \to 0$ are based on setting $q=e^{-w}$.

Moreover we need the following Tauberian theorem, which is a special case of Ingham's more general result \mbox{\cite[Theorem $1'$]{Ingham1}}.
\begin{theorem}\label{Taub}
	Suppose that $B(q)=\sum_{n\ge0}b_nq^n$ is a power series with weakly increasing non-negative
	coefficients and radius of convergence at least one. If
	$\lambda$, $\alpha$, $\beta$, and $\gamma$ are real numbers with $\gamma>0$ such that
	\begin{gather*}
	B\left(e^{-t}\right) \sim \lambda \log\left( \frac{1}{t} \right)^\alpha t^\beta e^{\frac{\gamma}{t}}
	\quad\mbox{as } t\rightarrow0^+,
	\qquad\qquad
	B\left(e^{-z}\right) \ll \log\left( \frac{1}{|z|} \right)^\alpha |z|^\beta e^{\frac{\gamma}{|z|}}
	\quad\mbox{as } z\rightarrow0,
	\end{gather*}
	with $z=x+iy$ in each region of the form $|y|\le \Delta x$ with $\Delta>0$,
	then
	\begin{gather*}
	b_n
	\sim
	\frac{ \lambda   \gamma^{\frac{\beta}{2}+\frac14 }}
	{2^{\alpha+1}\sqrt{\pi}  }\log\left( n \right)^\alpha n^{-\frac{\beta}{2}-\frac34}
	e^{2\sqrt{\gamma n}}
	\qquad\qquad\mbox{as } n\rightarrow\infty.
	\end{gather*}

\end{theorem}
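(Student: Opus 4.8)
This is presented as a special case of Ingham's Tauberian theorem \cite[Theorem $1'$]{Ingham1}, so one legitimate route is simply to match the parameters $(\lambda,\alpha,\beta,\gamma)$ against Ingham's general hypotheses and read off the conclusion. Since the real content here is the explicit constant, I would instead sketch the self-contained saddle-point derivation that produces it, and then indicate exactly where the monotonicity hypothesis must enter. The starting point is Cauchy's formula: for any $0<r<1$,
\begin{equation*}
b_n = \frac{1}{2\pi i}\oint_{|q|=r}\frac{B(q)}{q^{n+1}}\,dq = \frac{r^{-n}}{2\pi}\int_{-\pi}^{\pi} B\!\left(re^{i\theta}\right)e^{-in\theta}\,d\theta.
\end{equation*}
Writing $q=e^{-z}$ with $z=t-i\theta$ and choosing the radius $r=e^{-t_n}$ at the saddle point $t_n:=\sqrt{\gamma/n}$ of the dominant exponential factor turns the integrand into $B(e^{-z})e^{nz}$, whose modulus is governed by $\exp(g(z))$ with $g(z):=\gamma/z+nz$. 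A direct computation gives $g'(t_n)=0$, $g(t_n)=2\sqrt{\gamma n}$, and $g''(t_n)=2\gamma^{-1/2}n^{3/2}$, which already explains the factor $e^{2\sqrt{\gamma n}}$ and fixes the Gaussian width.

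The plan then splits the circle into a major arc $|\theta|\le\theta_0$ and a minor arc $\theta_0<|\theta|\le\pi$, with an intermediate scale $t_n^{3/2}\ll\theta_0\ll t_n^{4/3}$ chosen so that the Gaussian is essentially captured while the cubic term of $g$ is negligible. On the major arc one has $z=t_n-i\theta$ with $|y|=|\theta|\ll t_n=x$, so $z$ stays in a region of the form $|y|\le\Delta x$ and the first hypothesis applies:
\begin{equation*}
B\!\left(e^{-z}\right)e^{nz}\sim \lambda\,\log(1/z)^{\alpha}\,z^{\beta}\,e^{g(z)}.
\end{equation*}
Expanding $g(z)\approx 2\sqrt{\gamma n}-\tfrac12 g''(t_n)\theta^2$ and replacing the slowly varying prefactor by its value $\lambda\,\log(1/t_n)^{\alpha}t_n^{\beta}$ at the saddle reduces the major-arc contribution to the Gaussian integral $\int e^{-\frac12 g''(t_n)\theta^2}\,d\theta\sim\sqrt{2\pi/g''(t_n)}=\sqrt{\pi}\,\gamma^{1/4}n^{-3/4}$. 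Using $\log(1/t_n)^{\alpha}\sim 2^{-\alpha}\log(n)^{\alpha}$ and $t_n^{\beta}=\gamma^{\beta/2}n^{-\beta/2}$, the main term collapses to exactly
\begin{equation*}
\frac{\lambda\,\gamma^{\frac{\beta}{2}+\frac14}}{2^{\alpha+1}\sqrt{\pi}}\log(n)^{\alpha}\,n^{-\frac{\beta}{2}-\frac34}\,e^{2\sqrt{\gamma n}},
\end{equation*}
which is the claimed asymptotic; the errors from truncating the Gaussian tails and from the quadratic approximation of $g$ are readily seen to be of lower order for the above choice of $\theta_0$.

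The genuine obstacle is the minor arc, and this is precisely where the Tauberian hypothesis is indispensable. The second hypothesis only controls $B(e^{-z})$ while $z$ remains in a cone $|y|\le\Delta x$, i.e.\ for $\theta$ comparable to $t_n$; it gives no pointwise information once $\theta$ is bounded away from $0$. To show that $\theta_0<|\theta|\le\pi$ contributes negligibly I would not attempt a pointwise bound on $B$ there, but instead invoke that the $b_n$ are non-negative and weakly increasing. Monotonicity forces the coefficients to be spread out and rules out the cancellation-driven concentration on the minor arc that an unrestricted sequence could exhibit; quantitatively, one compares the full integral against its major-arc part by a partial-summation argument, using that $\sum_{m\le n}b_m$ is itself pinned down by $B(e^{-t_n})$ through the monotonicity. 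Making this comparison precise --- namely, that the minor-arc integral is $o$ of the major-arc main term uniformly in $n$ --- is the technical heart of the matter and is exactly what Ingham establishes in the general setting. I would therefore either reproduce his comparison estimate or, more economically, cite \cite[Theorem $1'$]{Ingham1} after verifying that the parameters $(\lambda,\alpha,\beta,\gamma)$ match.
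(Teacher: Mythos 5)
The paper offers no proof of this statement at all: it is quoted as a special case of Ingham's Theorem $1'$, with the technical details (including the necessity of the angular condition $|y|\le\Delta x$) deferred to \cite{Ingham1} and to the companion preprint \cite{BringmannJenningsShafferMahlburg1}. Your final fallback --- verify that the parameters $(\lambda,\alpha,\beta,\gamma)$ match and cite Ingham --- therefore coincides exactly with what the paper does, and is a complete argument at the level of rigor the paper itself adopts. Your saddle-point sketch is a genuine addition: the computations $g(t_n)=2\sqrt{\gamma n}$, $g''(t_n)=2\gamma^{-1/2}n^{3/2}$, the Gaussian integral $\sqrt{\pi}\,\gamma^{1/4}n^{-3/4}$, the replacement $\log(1/t_n)^\alpha\sim 2^{-\alpha}\log(n)^\alpha$, and the window $t_n^{3/2}\ll\theta_0\ll t_n^{4/3}$ are all correct and do reproduce the stated constant, and you are right that on the major arc $z=t_n-i\theta$ stays inside a cone $|y|\le\Delta x$ so the hypotheses apply there. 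You are also right to flag that this sketch is \emph{not} a proof: the hypotheses give no pointwise control of $B(e^{-z})$ once $\theta$ leaves the cone, the minor arc cannot be handled by the circle method alone, and your appeal to monotonicity via partial summation is only a gesture at the mechanism --- Ingham's actual argument is a Tauberian comparison rather than a contour estimate, and reconstructing it would be a substantial undertaking. Treated as a self-contained proof the proposal is therefore incomplete at precisely the point you identify; treated as a reduction to the cited literature it matches the paper, with the saddle-point heuristic serving as a useful sanity check on the constant.
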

\begin{remark}
Theorem \ref{Taub} has been stated in a number of recent publications without the boundedness condition for  ``angular'' regions $|y| \leq \Delta x$, but this is in error, as the general statement does indeed require the additional bound. This was noted by Ingham \cite[p. 1088]{Ingham1}, and the technical aspects of the theorem are discussed in great detail in the authors' recent preprint \cite{BringmannJenningsShafferMahlburg1}. However, this point is of no great concern here, as Section 3.1 of \cite{BringmannJenningsShafferMahlburg1} also explains that if the asymptotic behavior of $f$ is determined by modular inversion, then the angular boundedness condition holds automatically.
\end{remark}

We next recall a result for asymptotic expansions that follows from the Euler-Maclaurin summation formula. Although this technique is widely used (see Section 6.7.4 of \cite{Zagier1}), the only source we aware of that provides a proof for all of the following results is the authors' recent preprint \cite{BringmannJenningsShafferMahlburg1}.
We say that a function $f$ is of {\it sufficient decay} in a domain $D\subset\C$ if there exists some $\varepsilon > 0$
such that $f(w) \ll w^{-1-\varepsilon}$ as $|w| \to \infty$ in $D$.
\begin{proposition}\label{Prop:EulerMaclaurin1DShifted}
	Suppose that  $0\le \theta < \frac{\pi}{2}$ and let
	$D_\theta := \{ re^{i\alpha} : r\ge0 \mbox{ and } |\alpha|\le \theta  \}$.
	Let $f:\C\rightarrow\C$ be holomorphic in a domain containing
	$D_\theta$, so in particular $f$ is holomorphic at the origin, and
	assume that $w \mapsto f(w)$ and all of its derivatives are of sufficient decay.
	Then for $a\in\mathbb{R}$ and $N\in\N_0$,
	\begin{gather*}
	\sum_{m\geq0}f(w(m+a))
	=
	\frac{1}{w}\int_0^\infty f(x) dx
	-
	\sum_{n=0}^{N-1} \frac{B_{n+1}(a) f^{(n)}(0)}{(n+1)!}w^n
	+
	O_N\left(w^N\right)
	,
	\end{gather*}
	uniformly, as $w\rightarrow0$ in $D_\theta$.
\end{proposition}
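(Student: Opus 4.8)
The plan is to reduce the statement to the classical Euler--Maclaurin expansion for the single-variable function $g(x) := f(wx)$, and then to track the $w$-dependence of each resulting piece as $w \to 0$ within the sector $D_\theta$. Writing the left-hand side as $\sum_{m \ge 0} g(m+a)$, the sufficient decay of $f$ ensures that $|f(w(m+a))| \ll |w(m+a)|^{-1-\varepsilon}$ for large $m$, so the series converges absolutely along the ray of argument $\Arg(w)$. The three terms of the claimed formula will come, respectively, from the integral $\int_0^\infty g$, from the boundary contributions at the origin of an integration-by-parts expansion, and from a remainder integral; since $g^{(n)}(0) = w^n f^{(n)}(0)$, the finite sum immediately takes the stated shape $-\sum_{n=0}^{N-1} \frac{B_{n+1}(a) f^{(n)}(0)}{(n+1)!} w^n$.

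For the leading term, substituting $u = wx$ gives $\int_0^\infty f(wx)\,dx = \frac1w \int_L f(u)\,du$, where $L$ is the ray from $0$ to $\infty$ at angle $\Arg(w) \in [-\theta, \theta]$. Because $f$ is holomorphic on a neighborhood of $D_\theta$ and decays like $|u|^{-1-\varepsilon}$, Cauchy's theorem lets me rotate $L$ back onto $[0,\infty)$: the circular arcs at radius $R \to \infty$ and $r \to 0$ contribute $O(R^{-\varepsilon})$ and $o(1)$, respectively, and so vanish. Hence $\int_0^\infty g(x)\,dx = \frac1w \int_0^\infty f(u)\,du$, matching the leading term uniformly in the sector.

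Next I would establish the exact shifted Euler--Maclaurin identity
\[
\sum_{m \ge 0} g(m+a) = \int_0^\infty g(x)\,dx - \sum_{n=0}^{N-1} \frac{B_{n+1}(a)}{(n+1)!} g^{(n)}(0) + R_N,
\]
where $R_N$ is an explicit integral of $g^{(N)}$ against the periodic Bernoulli function $\overline{B}_N(x) := B_N(x - \lfloor x \rfloor)$. This follows by summing over $m$ the single-interval identities obtained from repeatedly integrating by parts with $\overline{B}_{n+1}'(y) = (n+1)\overline{B}_n(y)$; the values of the Bernoulli polynomials at $a$ produced by the boundary terms at the origin assemble into the displayed finite sum (the standard sign and constant bookkeeping fixes $R_N = \frac{(-1)^{N-1}}{N!}\int_0^\infty \overline{B}_N(a-x) g^{(N)}(x)\,dx$). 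For $a \in [0,1)$ the sample points $m+a$ lie in $[m,m+1)$ and the derivation is direct; the case of general real $a$ then follows from the relation $B_{n+1}(a+1) - B_{n+1}(a) = (n+1)a^n$.

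The crux, and the step I expect to be the main obstacle, is to show $R_N = O_N(w^N)$ uniformly in $D_\theta$. Substituting $g^{(N)}(x) = w^N f^{(N)}(wx)$, a naive estimate using $|\overline{B}_N| \ll 1$ and $|f^{(N)}(wx)| \ll (1 + |w|x)^{-1-\varepsilon}$ only gives $\int_0^\infty |f^{(N)}(wx)|\,dx \ll |w|^{-1}$, hence $R_N \ll w^{N-1}$, which is one power short. The gain of the missing power comes from the fact that $\overline{B}_N$ has mean zero over each period for $N \ge 1$: writing $\overline{B}_N(a-x) = -\frac{1}{N+1}\frac{d}{dx}\overline{B}_{N+1}(a-x)$ and integrating by parts once more produces a boundary term $\frac{B_{N+1}(a) f^{(N)}(0)}{N+1}$ of size $O(1)$ together with a new integral carrying an extra factor of $w$ from $\frac{d}{dx} f^{(N)}(wx) = w f^{(N+1)}(wx)$; the latter is $w \cdot O(|w|^{-1}) = O(1)$. (This is simply the observation that the next Euler--Maclaurin term is itself $O(w^N)$.) Thus the remainder integral is $O(1)$ and $R_N = O_N(w^N)$. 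A key technical point here is that one cannot substitute $u = wx$ inside $R_N$, since $\overline{B}_N$ is bounded only on the real axis; the estimate must be carried out with $x$ real, using the uniform decay of $f^{(N)}$ and $f^{(N+1)}$ along the rotated ray $u = wx$. All implied constants depend only on $N$, $a$, $\theta$ and the decay of $f$, giving the uniformity in $\Arg(w) \in [-\theta,\theta]$.
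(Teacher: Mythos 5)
Your proposal is correct, and I have no substantive objections: the reduction to the classical shifted Euler--Maclaurin identity for $g(x)=f(wx)$, the contour rotation justifying $\int_0^\infty f(wx)\,dx=\frac1w\int_0^\infty f(u)\,du$ via the sufficient decay of $f$ on the sector, and, crucially, the extra integration by parts against $\overline{B}_{N+1}$ (equivalently, running the expansion to order $N+1$ and noting the last term is itself $O_N(w^N)$) to upgrade the naive $O(w^{N-1})$ remainder bound to $O(w^N)$ are all exactly the right ingredients, and your warning that one must estimate the remainder with $x$ real rather than substituting $u=wx$ is well taken. Note that the paper itself does not prove this proposition --- it explicitly defers to the authors' preprint \cite{BringmannJenningsShafferMahlburg1} --- but your argument is essentially the one carried out there, so this is the ``same approach'' case; the only loose ends are cosmetic (the sign convention in your $R_N$, and the degenerate case $N=0$, which follows from the case $N=1$ since the extra term $-B_1(a)f(0)$ is $O(1)$).
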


 A useful corollary also gives a compact expression in the case of alternating signs.
\begin{corollary}\label{Cor:EulerMaclaurinBooleShift}
Under the assumptions and notation of Proposition \ref{Prop:EulerMaclaurin1DShifted}, we have
	\begin{gather*}
	\sum_{m\geq0} (-1)^m f(w(m+a))
	=
	\frac{1}{2}\sum_{n=0}^{N-1} \frac{E_{n}(a) f^{(n)}(0)}{n!}w^n
	+
	O_N\left(w^N\right)
	,
	\end{gather*}
	uniformly, as $w\rightarrow0$ in $D_\theta$, where
	$E_n(x)$ are the Euler polynomials.
\end{corollary}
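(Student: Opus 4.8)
The plan is to reduce the alternating sum to two ordinary (non-alternating) sums and then apply Proposition \ref{Prop:EulerMaclaurin1DShifted} to each. Splitting the sum over $m\geq 0$ according to the parity of $m$, writing $m=2\mu$ for the even terms and $m=2\mu+1$ for the odd terms, and factoring a $2$ out of each argument, I would first record the identity
\[
\sum_{m\geq0} (-1)^m f(w(m+a))
=
\sum_{\mu\geq0} f\!\left(2w\left(\mu+\tfrac{a}{2}\right)\right)
-
\sum_{\mu\geq0} f\!\left(2w\left(\mu+\tfrac{a+1}{2}\right)\right).
\]
Each of the two resulting sums is exactly of the form treated in Proposition \ref{Prop:EulerMaclaurin1DShifted}, with $w$ replaced by $2w$ and with the real shift parameters $\frac{a}{2}$ and $\frac{a+1}{2}$, respectively. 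Since $2w$ has the same argument as $w$, the point $2w$ remains in $D_\theta$, and the decay hypotheses on $f$ and all its derivatives are unchanged; thus the Proposition applies verbatim to both pieces.

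Next I would insert the two expansions. The leading terms $\frac{1}{2w}\int_0^\infty f(x)\,dx$ are identical for both sums and hence cancel in the difference; this is precisely the mechanism by which the alternating sum avoids the polar contribution present in the ordinary case. What remains, after absorbing the two error terms $O_N((2w)^N)$ into a single $O_N(w^N)$, is
\[
\sum_{m\geq0}(-1)^m f(w(m+a))
=
\sum_{n=0}^{N-1}
\frac{B_{n+1}\!\left(\frac{a+1}{2}\right)-B_{n+1}\!\left(\frac{a}{2}\right)}{(n+1)!}\,
f^{(n)}(0)\,(2w)^n
+
O_N\!\left(w^N\right).
\]

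The one genuinely non-formal ingredient is the classical duplication identity relating Bernoulli and Euler polynomials,
\[
E_n(x)
=
\frac{2^{n+1}}{n+1}\left[B_{n+1}\!\left(\tfrac{x+1}{2}\right)-B_{n+1}\!\left(\tfrac{x}{2}\right)\right],
\]
which follows by combining the multiplication formula $B_m(x)=2^{m-1}\left(B_m(\frac{x}{2})+B_m(\frac{x+1}{2})\right)$ with the standard relation $E_{m-1}(x)=\frac{2}{m}\left(B_m(x)-2^mB_m(\frac{x}{2})\right)$ (alternatively, one may verify it directly from the generating functions). Substituting $x=a$ gives $B_{n+1}(\frac{a+1}{2})-B_{n+1}(\frac{a}{2})=\frac{n+1}{2^{n+1}}E_n(a)$, and plugging this into the previous display collapses the coefficient of $f^{(n)}(0)$ to $\frac{E_n(a)}{2\,n!}w^n$, which is exactly the claimed expansion. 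I expect this polynomial identity to be the main point to get right; the remaining steps are bookkeeping, and the uniformity of the error term in $D_\theta$ is inherited directly from the corresponding uniformity in Proposition \ref{Prop:EulerMaclaurin1DShifted}.
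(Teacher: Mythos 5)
Your argument is correct: the parity splitting, the cancellation of the two $\frac{1}{2w}\int_0^\infty f(x)\,dx$ terms, the identity $E_n(x)=\frac{2^{n+1}}{n+1}\left(B_{n+1}\left(\frac{x+1}{2}\right)-B_{n+1}\left(\frac{x}{2}\right)\right)$, and the uniformity/error bookkeeping (noting $2w\in D_\theta$ whenever $w\in D_\theta$) all check out. The paper itself states Corollary \ref{Cor:EulerMaclaurinBooleShift} without proof, deferring to \cite{BringmannJenningsShafferMahlburg1}, where essentially this same reduction of the alternating sum to two instances of Proposition \ref{Prop:EulerMaclaurin1DShifted} is carried out.
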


We  also need the asymptotic expansion in the case that the function has a simple pole at zero. For $a \not \in -\N_0$, define the constant
$C_a :=-\gamma-\psi(a)$, where $\psi(a):= \frac{\Gamma'(a)}{\Gamma(a)}$
is the {\it digamma function} \cite[equation 6.3.16]{AS64}, and $\gamma$ is the Euler-Mascheroni constant.
\begin{proposition}
	\label{Prop:EulerMaclaurin1DPoleShifted}
	Suppose that $0\le \theta < \frac{\pi}{2}$.
	Let $f:\C\rightarrow\C$ be holomorphic in a domain containing
	$D_\theta$, except for a simple pole at the origin,
	and assume that $w \mapsto f(w)$ and all of its derivatives are of sufficient decay
	as $|w|\rightarrow\infty$ in $D_\theta$.
	If
	$
	f(w) = \sum_{n\geq-1} b_{n}w^n,
$
	then for $a\in\mathbb{R}$, with $a\not\in-\N_0$,
	and $N\in\N_0$, uniformly, as $w\rightarrow0$ in $D_\theta$,
	\begin{multline*}
	\sum_{m\geq0} f(w(m+a))
	=-
	\frac{b_{-1}\Log(w)}{w}
	+
	\frac{b_{-1}C_a}{w}
	+
	\frac{1}{w}\int_0^\infty \left( f(x) - \frac{b_{-1}e^{-x}}{x}\right) dx
	\\	
	-
	\sum_{n=0}^{N-1} \frac{B_{n+1}(a) b_n}{n+1}w^n+
	O_N\left(w^N\right)
	.
	\end{multline*}
\end{proposition}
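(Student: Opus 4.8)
The plan is to reduce to the pole-free case already established in Proposition \ref{Prop:EulerMaclaurin1DShifted} by subtracting an explicit comparison function that carries the entire polar singularity. Concretely, I would set $g(w) := \frac{b_{-1}e^{-w}}{w}$, which has a simple pole at the origin with the same principal part $\frac{b_{-1}}{w}$ as $f$, and which decays exponentially (hence with sufficient decay, together with all its derivatives) as $|w|\to\infty$ in $D_\theta$, since $\RE(w)\ge|w|\cos\theta>0$ there. Then $f-g$ extends holomorphically across the origin, and its Taylor coefficients are $\frac{(f-g)^{(n)}(0)}{n!}=b_n-b_{-1}\frac{(-1)^{n+1}}{(n+1)!}$, read off from $e^{-w}=\sum_{k\ge0}\frac{(-1)^k}{k!}w^k$. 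Applying Proposition \ref{Prop:EulerMaclaurin1DShifted} to $f-g$ produces exactly the integral term $\frac1w\int_0^\infty(f(x)-\frac{b_{-1}e^{-x}}{x})\,dx$ and the main polynomial sum $-\sum_{n=0}^{N-1}\frac{B_{n+1}(a)b_n}{n+1}w^n$, up to an extra finite sum coming from the $\frac{(-1)^{n+1}}{(n+1)!}$-correction.

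The remaining and central task is to expand $\sum_{m\ge0}g(w(m+a))=\frac{b_{-1}}{w}S(w)$, where $S(w):=\sum_{m\ge0}\frac{e^{-w(m+a)}}{m+a}$, and to show that the finite sum produced here cancels the one left over from the first step. For this I would differentiate termwise to get $S'(w)=-\sum_{m\ge0}e^{-w(m+a)}=-\frac{e^{(1-a)w}}{e^w-1}$, and then insert the Bernoulli generating function $\frac{we^{(1-a)w}}{e^w-1}=\sum_{n\ge0}B_n(1-a)\frac{w^n}{n!}$ together with the reflection $B_n(1-a)=(-1)^nB_n(a)$. Integrating the resulting expansion $S'(w)=-\frac1w-\sum_{n\ge1}\frac{(-1)^nB_n(a)}{n!}w^{n-1}$ term by term yields $S(w)=-\Log(w)+C_a-\sum_{n\ge1}\frac{(-1)^nB_n(a)}{n\cdot n!}w^n$; after dividing by $w$ and reindexing, the polynomial part is precisely the compensating sum needed to cancel the leftover from the first step, while the first two terms give $-\frac{b_{-1}\Log(w)}{w}+\frac{b_{-1}C_a}{w}$.

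What remains is to pin down the constant of integration $C_a$. Writing $\frac{1}{m+a}=\int_0^\infty e^{-(m+a)u}\,du$ and summing the geometric series gives the closed form $S(w)=\int_w^\infty\frac{e^{-av}}{1-e^{-v}}\,dv$, which is analytic for $\RE(w)>0$; subtracting the $\frac1v$ singularity and letting $w\to0^+$ identifies $C_a=\lim_{w\to0^+}(S(w)+\log w)$ as an absolutely convergent integral. Comparing this with Gauss's integral representation $\psi(a)=\int_0^\infty(\frac{e^{-t}}{t}-\frac{e^{-at}}{1-e^{-t}})\,dt$ and the standard integral for $\gamma$ yields $C_a=-\gamma-\psi(a)$, matching the definition in the statement; as a sanity check, $a=1$ gives $C_1=0$ and recovers $S(w)=-\log(1-e^{-w})$.

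I expect the main obstacle to be the expansion of the $g$-sum, in two respects. First, the clean evaluation of the constant $C_a$ via the digamma integral, which is where the special-function input enters. Second, and more delicately, the verification that the expansion of $S(w)$ holds uniformly throughout the sector $D_\theta$ (not merely along $\R_{>0}$), so that the branch $\Log(w)$ is the correct analytic continuation of $\log$ there. The uniformity should follow because the expansion of $S'(w)$ inherited from the Bernoulli generating function is uniform on $D_\theta$, and integrating from a fixed base point inside the sector preserves this, while $\RE(w)>0$ on $D_\theta\setminus\{0\}$ guarantees $\Log(w)$ is well defined; the remaining error bookkeeping is routine once the two leftover finite sums are seen to cancel.
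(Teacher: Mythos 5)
The paper does not actually prove Proposition \ref{Prop:EulerMaclaurin1DPoleShifted}: it is quoted as a preliminary fact, with the proof deferred to the authors' preprint \cite{BringmannJenningsShafferMahlburg1}. So there is no in-paper argument to compare against; judged on its own, your proof is correct and is the natural argument (and, to my knowledge, the same subtraction strategy used in the cited preprint). The decomposition $f = (f-g) + g$ with $g(w) = b_{-1}e^{-w}/w$ cleanly reduces to Proposition \ref{Prop:EulerMaclaurin1DShifted}, your bookkeeping of the Taylor coefficients of $g$ is right, and the two leftover finite sums do cancel up to $O_N(w^N)$ because the series $-b_{-1}\sum_{n\ge 0}\tfrac{(-1)^{n+1}B_{n+1}(a)}{(n+1)(n+1)!}w^n$ coming from $S(w)/w$ converges near $0$ (the Laurent expansion of $S'(w)=-e^{(1-a)w}/(e^w-1)$ is a genuine convergent series for $0<|w|<2\pi$, so truncating it at order $N$ is legitimate and uniform in the sector). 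The identification $C_a=-\gamma-\psi(a)$ via Gauss's integral for $\psi$ checks out, as does the sanity check $C_1=0$.

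Two minor points you should tidy up. First, the representation $\tfrac{1}{m+a}=\int_0^\infty e^{-(m+a)u}\,du$ and the closed form $S(w)=\int_w^\infty \tfrac{e^{-av}}{1-e^{-v}}\,dv$ require $m+a>0$; for general real $a\notin-\N_0$ finitely many initial terms must be treated separately (or one proves the constant identification for $a>0$ and extends by analyticity in $a$, since both $S(w)+\Log(w)$ and $-\gamma-\psi(a)$ are analytic in $a$ off the poles). Second, when you apply Proposition \ref{Prop:EulerMaclaurin1DShifted} to $f-g$ you should note explicitly that $g$ and all its derivatives are of sufficient decay in $D_\theta$ because $\RE(w)\ge |w|\cos\theta$ there — you do say this, and it is exactly why the hypothesis $\theta<\tfrac{\pi}{2}$ is needed. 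Neither point is a genuine gap.
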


\subsection{Partition statistics}

We  use several basic definitions from the theory of integer partitions. For a partition $\lambda = \lambda_1 + \dots + \lambda_\ell$, with parts written in weakly decreasing order, Dyson \cite{Dyson1} defined its {\it rank} to be
\begin{equation*}
\rank(\lambda) := \lambda_1 - \ell.
\end{equation*}
Let $N(m,n)$ denote the number of partitions of $n$ with rank $m$.

Furthermore, let $\omega(\lambda)$ denote the number of ones in $\lambda$, and let $\mu(\lambda)$ be the number of parts larger than $\omega(\lambda)$. As defined by Andrews and Garvan \cite{AndrewsGarvan1} (and building on Garvan's earlier work on the ``vector crank'' \cite{Gar}), the {\it crank} of the partition is then given by
\begin{equation*}
\crank(\lambda) := \begin{cases} \lambda_1 \quad &\text{if } \omega(\lambda) = 0, \\
\mu(\lambda) - \omega(\lambda) &\text{if } \omega(\lambda) \ge 1.
\end{cases}
\end{equation*}
Let $M(m,n)$ denote the number of partitions of $n$ with crank $m$ (with a slight modification in the case that $n = 1$, where the values are instead  $M(\pm 1, 1) = 1, M(0,1) = -1$).

As was thoroughly discussed in \cite{BJSMR1}, the partition rank is combinatorially related to the unimodal rank, and it therefore is somewhat surprising that it is instead the generating function of the crank that plays a major role in the asymptotic analysis of the unimodal rank. The following product formula is found in \cite{AndrewsGarvan1}:
\begin{align}
\label{E:crank}
C(\zeta;q)
:= \sum_{\substack{ n \geq 0\\ m \in \Z}} M(m,n) \zeta^m q^n
= \frac{(q)_\infty}{(\zeta q, \zeta^{-1} q)_\infty}.
\end{align}

\section{Unimodal sequences}\label{unimodal}
\label{Sec:unimodal}

In this section we prove the asymptotic results for the rank of unimodal sequences, beginning with the moments. Throughout we write $\zeta=e^{z}$, so that
\begin{gather*}
\left[ \partial_z^k \left(\zeta^m\right) \right]_{z=0} = m^k,\qquad \textnormal{where }\partial_z:=\frac{\partial}{\partial z}.
\end{gather*}

\begin{proof}[Proof of Theorem {\rm \ref{thm:AsymptoticForMoments} (1)}]
Since $u_{2k}(n)\le u_{2k}(n+1)$ (as can be seen by adding one to the peak), we can determine
the asymptotic value of $u_{2k}(n)$ by Theorem \ref{Taub},
once we know the
asymptotic main term of $\mathbb{U}_{2k}(e^{-w})$ as $w\rightarrow0$
with $|\Arg(w)|\le \theta<\frac{\pi}{2}$.

For this, we state the following identity from Proposition 2.1 of \cite{KimLovejoy1}
\begin{align}
\label{E:Ugen}
U(\zeta;q)
&= \frac{(q)_\infty}{\left(\zeta q,\zeta^{-1}q\right)_\infty}G_1(\zeta;q) + H_1(\zeta;q),
\end{align}
where
\begin{align*}
G_1(\zeta;q)
&:=
	\frac{1}{(q)_\infty}
	\sum_{n\ge0} (-1)^n \zeta^{2n+1} q^{\frac{n(n+1)}{2}},
\\
H_1(\zeta;q)
&:=
	(1-\zeta)\sum_{n\ge0} (-1)^n \zeta^{3n} q^{\frac{n(3n+1)}{2}} \left(1-\zeta^2q^{2n+1}\right)
.
\end{align*}
Note that the product in \eqref{E:Ugen} is exactly $C(\zeta; q)$, as in \eqref{E:crank}.

Since $\mathbb{U}_{2k}(q) = [ \partial_z^{2k} (U(\zeta;q))]_{z=0}$, we compute, for $\ell \in \N_0$
\begin{align*}
\left[\partial_z^\ell (G_1(\zeta;q)) \right]_{z=0}
&=
	\frac{1}{(q)_\infty}
	\sum_{n\ge0} (-1)^n (2n+1)^{\ell} q^{\frac{n(n+1)}{2}}
,\\
\left[ \partial_z^\ell(H_1(\zeta;q)) \right]_{z=0}
&=
	\sum_{n\ge0} (-1)^n q^{\frac{n(3n+1)}{2}}
	\left( (3n)^{\ell} - (3n+1)^{\ell} - (3n+2)^{\ell}q^{2n+1} + (3n+3)^{\ell}q^{2n+1}	
	\right)
.
\end{align*}	
Denoting the $\ell$-th moment generating function of the crank by
\begin{equation*}
C_{\ell}(q) := \sum_{\substack{n \geq 0 \\ m \in \Z}} m^\ell M(m,n) q^n,
\end{equation*}
we obtain, using the product rule 
\begin{align}\label{Eq:UMomentsReadyForAsymptotics}
\mathbb{U}_{2k}(q)
&=
	\left[ \partial_z^{2k} \left( C(\zeta;q)G_1(\zeta;q)+H_1(\zeta;q) \right)\right]_{z=0}
\\
&=
	\sum_{j=0}^{2k} \binom{2k}{j} C_{j}(q)
	\left[ \partial_z^{2k-j} (G_1(\zeta;q)) \right]_{z=0}
	\nonumber\\&\quad
	+
	\sum_{n\ge0} (-1)^n q^{\frac{n(3n+1)}{2}}
	\left( (3n)^{2k} - (3n+1)^{2k} - (3n+2)^{2k}q^{2n+1} + (3n+3)^{2k}q^{2n+1}	
	\right)
\nonumber\\
&=
	\sum_{j=0}^{k} \binom{2k}{2j} C_{2j}(q)
	\left[ \partial_{z}^{2(k-j)} (G_1(\zeta;q)) \right]_{z=0}
	\nonumber\\&\quad
	+
	\sum_{n\ge0} (-1)^n q^{\frac{n(3n+1)}{2}}
	\left( (3n)^{2k} - (3n+1)^{2k} - (3n+2)^{2k}q^{2n+1} + (3n+3)^{2k}q^{2n+1}	
	\right)
\nonumber\\
&=
	\frac{1}{(q)_\infty}
	\sum_{j=0}^{k} \binom{2k}{2j} C_{2j}(q)
	\sum_{n\ge0} (-1)^n (2n+1)^{2(k-j)} q^{\frac{n(n+1)}{2}}
	\nonumber\\&\quad
	+
\sum_{n\ge0} (-1)^n q^{\frac{n(3n+1)}{2}}	
	\left( (3n)^{2k} - (3n+1)^{2k} - (3n+2)^{2k}q^{2n+1}  \nonumber + (3n+3)^{2k}q^{2n+1}	
	\right)
,
\end{align}
where in the penultimate equality we use the fact that the odd moments of the crank generating function vanish.
	
We now determine the asymptotic behavior of the individual components. Using Proposition \ref{Prop:EulerMaclaurin1DShifted}, we may show that the second sum is bounded by the polynomial order $O(w^{-\frac{\ell}{2}})$ for some $\ell \in  \Z$, as $w \to 0$.
For the first term we first determine the behavior of the crank moments. Corollary 3.3 of \cite{BMR} implies that
\begin{equation}
\label{E:C2jAsymp}
C_{2j}\left( e^{-w} \right)
\sim
	(-1)^j B_{2j} \left( \frac 12 \right) \left( \frac{\vphantom{1}w}{2\pi}\right)^{\frac 12 -2j} e^{\frac{\pi^2}{6w}}
,
\end{equation}
where the limit can be taken in any region with $|\Arg(w)|\le \theta<\frac{\pi}{2}$.

Next we determine the behavior of
\begin{align*}
F_j(w)
&:=
	w^{k-j} 2^{2(j-k)} e^{-\frac{w}{8}}
	\sum_{n\geq 0} (-1)^n (2n+1)^{2(k-j)} e^{-\frac{n(n+1)w}{2}}=
	\sum_{n\geq 0} (-1)^n f_{k-j}\left(\sqrt{w}\left(n+\frac12\right) \right)
,
\end{align*}
where
\begin{equation*}
f_\ell(w) :=w^{2\ell}e^{-\frac{w^2}{2}}.
\end{equation*}
From Corollary \ref{Cor:EulerMaclaurinBooleShift}
we obtain  (because $E_{2n+1}(\frac12) = 0$) that
\begin{align}\label{asF}
F_j(w)
\sim
	\frac{1}{2}E_{2k-2j}\left(\frac12\right) w^{k-j}
.
\end{align}
Using \eqref{Pochas}, \eqref{E:C2jAsymp}, and \eqref{asF} gives that the first summand
of \eqref{Eq:UMomentsReadyForAsymptotics} (with $q=e^{-w}$) equals
\begin{multline*}
\frac{1}{\left( e^{-w};e^{-w}\right)_\infty}
	\sum_{j=0}^{k} \binom{2k}{2j} C_{2j} \left(e^{-w} \right) w^{j-k} 2^{2(k-j)} e^{\frac{w}{8}} F_j(w)
\\
\sim
	e^{\frac{\pi^2}{3w}} \sum_{j=0}^k \binom{2k}{2j} (-1)^{j} 2^{2k-2j-1}
	B_{2j}\left(\frac12\right) E_{2k-2j}\left(\frac12\right)
	\left(\frac{\vphantom{1}w}{2\pi}\right)^{1-2j}.
\end{multline*}
The $j=k$ term is dominant giving
\begin{gather*}
\frac12 (-1)^k B_{2k}\left(\frac12\right) \left(\frac{\vphantom{1}w}{2\pi}\right)^{1-2k} e^{\frac{\pi^2}{3w}}.
\end{gather*}

Applying Theorem \ref{Taub} then gives
the claimed asymptotic formula.
\end{proof}

We next turn to the proof of the limiting distribution for unimodal sequences. As in Section 4 of \cite{BJSMR1}, we use the probabilistic Method of Moments, which essentially employs  the limiting behavior of the moments of a sequence of random variables in order to
determine the limiting distribution (see Section 30 of \cite{Billingsley1}).
\begin{proof}[Proof of Proposition \ref{prop:URankDistribution} \ref{prop:URankDistribution:unimodal}]
The asymptotic formula for unimodal sequences with marked peaks is given by the case $k=0$ in Theorem \ref{thm:AsymptoticForMoments} \ref{thm:Asymp:u}, namely
\begin{align*}
u(n)
\sim 8^{-1} 3^{-\frac34} n^{-\frac54} e^{2\pi\sqrt{\frac{n}{3}}}.
\end{align*}
To the best of our knowledge, this expression first appeared in print as (5.1) in \cite{BringmannMahlburg2}.\footnote{However, as was further explained in Section 5 of \cite{BringmannMahlburg2}, the formula directly follows from earlier work of Stanley \cite{Sta11} and Wright \cite{Wri71}.}

Combining with the case of general $k$ in Theorem \ref{thm:AsymptoticForMoments}  \ref{thm:Asymp:u}, and using the relation $B_k(\frac12)=(2^{1-k}-1)B_k$ (see e.g. \cite[23.1.21]{AS64}), we therefore have, as $n\to\infty$
\begin{align*}
\frac{u_{2k}(n)}{u(n)} \sim  (3n)^k \left(2^{2k}-2\right)(-1)^{k+1}B_{2k}
.
\end{align*}
Since $(-1)^{k+1}B_{2k}>0$ \cite[23.1.15]{AS64}, we conclude that
\begin{align*}
\frac{u_{k}(n)}{(3n)^{\frac{k}{2}} u(n)}
\sim
	\left(2^k-2\right)|B_{k}|,
\end{align*}
as for $k$ odd  this is trivially true.
However, $(2^k-2)|B_{k}|$ is well-known to be the $k$-th
moment for the logistic distribution, with mean $\mu=0$ and scale $s=\frac{1}{\pi}$ (see \cite[p. 116--118]{JKB}),
and thus the the proof is complete upon applying the Method of Moments.
\end{proof}

Finally, we use the limiting distribution from above in order to calculate the asymptotic behavior of the absolute moments for the rank of unimodal sequences.

\begin{proof}[Proof of Corollary \ref{cor:AbsoluteMoments} {\rm(1)}]
Let $X_n$ denote the random variable defined by
\begin{gather*}
X_n(\sigma) := \frac{\rank(\sigma)}{\sqrt{3n}},
\end{gather*}
for $\sigma\in\mathcal{U}(n)$, with each $\sigma$ occuring with the uniform
probability $\frac{1}{u(n)}$,  and $X$ denote the random variable associated
to the logistic distribution.

The Method of Moments implies that $X_n$ converges in distribution to $X$. By the Continuous Mapping Theorem,
$|X_n|$ converges in distribution to $|X|$. By the corollary to Theorem 25.12
of \cite{Billingsley1}, if $\sup_{n\in\N}E[|X_n|^{r+\varepsilon}]<\infty$
for some $\varepsilon>0$, then $E[|X_n|^r]\rightarrow E[|X|^r]$.

For fixed $r$ we take $\varepsilon=1$ if $r$ is odd and $\varepsilon=2$
if $r$ is even. By doing so we have
\begin{align*}
\sup_{n\in\N} E\left[|X_n|^{r+\varepsilon}\right]
=
\sup_{n\in\N} E\left[X_n^{r+\varepsilon}\right]
,
\end{align*}
which is finite since
\begin{gather*}
\lim_{n\rightarrow\infty}
E\left[X_n^{r+\varepsilon}\right]
=
	\lim_{n\rightarrow\infty}
	\frac{u_{r+\varepsilon}(n)}{(3n)^{\frac{r+\varepsilon}{2}} u(n)}
=
	\left(2^{r+\varepsilon}-2\right)|B_{r+\varepsilon}|
.
\end{gather*}
Thus, with $\zeta$ denoting the Riemann zeta function,
\begin{gather*}
\lim_{n\rightarrow\infty} \frac{u^+_k(n)}{(3n)^{\frac{k}{2}} u(n)}
=
	E\left[\left\lvert X \right\rvert^k\right]
=
	2\Gamma(k+1) \pi^{-k} \left(1-2^{1-k}\right) \zeta(k)
,
\end{gather*}
where the formula for the absolute moments of the logistic distribution
was given in \cite[equation (23.11)]{JKB}.
\end{proof}

\section{Asymptotics for $u(m,n)$}\label{asymp}

In \cite{BJSMR1}, the authors conjectured the strict log-concavity of the rank
of strongly unimodal sequences and verified the conjecture in a limiting sense.
The same phenomenon appears to occur for $u(m,n)$.
\begin{conjecture}\label{ConjectureLogConcave}
For $n\ge37$ and $|m|\le n-23$, we have
\begin{gather*}
u(m,n)^2 > u(m-1,n)u(m+1,n).
\end{gather*}
\end{conjecture}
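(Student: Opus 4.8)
The plan is to combine a precise, $m$-uniform asymptotic analysis of $u(m,n)$ in the bulk with explicit estimates in the tail and a finite verification for small $n$; the appearance of the sharp thresholds $n\ge 37$ and $|m|\le n-23$ already signals that the inequality is genuinely boundary-sensitive, so no single uniform estimate can suffice. My starting point is the decomposition \eqref{E:Ugen}, which writes $U(\zeta;q)=C(\zeta;q)G_1(\zeta;q)+H_1(\zeta;q)$ with $C(\zeta;q)$ the crank generating function of \eqref{E:crank}. Extracting the coefficient of $\zeta^m q^n$ yields the exact formula
\[
u(m,n)=\sum_{j\ge 0}(-1)^j\sum_{k\ge 0}p(k)\,M\!\left(m-2j-1,\,n-\tfrac{j(j+1)}{2}-k\right)+h(m,n),
\]
where $p$ is the partition function, $M$ is the crank counting function, and $h(m,n)$ collects the finitely many explicit contributions of $H_1$. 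I would use this identity both as the analytic input for the bulk asymptotics and as the source of rigorous estimates in the tail.

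For the bulk $|m|\lesssim \sqrt{n}\log n$, I would obtain a uniform asymptotic for $u(m,n)$ by a saddle-point evaluation of the double contour integral that recovers $u(m,n)$ from $U(\zeta;q)$, with $\zeta=e^{z}$ and $z$ scaled like $n^{-1/2}$; the same modular input \eqref{Pochas} and crank-moment asymptotics \eqref{E:C2jAsymp} that drive Theorem \ref{thm:AsymptoticForMoments} and Proposition \ref{prop:URankDistribution} should produce the leading profile $u(m,n)\sim \frac{\pi\,u(n)}{4\sqrt{3n}}\operatorname{sech}^2\!\left(\frac{\pi m}{2\sqrt{3n}}\right)$. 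Since $-\log\cosh$ is strictly concave, the main term is strictly log-concave in $m$, and the target inequality reduces to showing that the discrete second difference $\log u(m-1,n)-2\log u(m,n)+\log u(m+1,n)$ is strictly negative. The delicate feature is that this is a curvature statement: I would need an asymptotic expansion whose error term is uniform in $m$ and small enough to survive the second difference, which forces me to track at least the first subleading correction rather than the leading order alone.

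The genuinely hard part is the tail $\sqrt{n}\log n \lesssim |m|\le n-23$, where one must be honest that the inequality is sharp. A direct count gives $u(n-1,n)=u(n-2,n)=1$ and $u(n-3,n)=3$, so $u(n-2,n)^2=1<3=u(n-3,n)u(n-1,n)$ and log-concavity \emph{fails} at $m=n-2$; thus the cutoff at $n-23$ is essential and its precise value must be earned rather than assumed. In this regime the dominant sequences have almost all parts after the peak equal to $1$, so $u(m,n)$ is governed by restricted partitions of $n-c$ into roughly $m$ parts of size at most $c$, and I would develop asymptotics (or explicit monotone upper and lower bounds) for these restricted counts directly from the exact formula above, matching them to the bulk estimate on an overlap region. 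Pinning down that the transition to failure occurs exactly at the last few values of $m$ — and hence that $23$ is the correct threshold — is the main obstacle, and I expect it to require constant-explicit tail bounds together with a finite check for $37\le n\le N_0$ for some computable $N_0$, beyond which the asymptotic regime takes over. The chief difficulty throughout is that strict log-concavity is a second-order condition, so every estimate must be carried out with explicit constants strong enough to survive a double difference near the boundary.
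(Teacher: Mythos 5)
You should first note that the statement you were asked to prove is left as a \emph{conjecture} in the paper: the authors do not prove it. The only rigorous result they offer in its direction is Lemma \ref{lem:LogConcave}, which verifies the inequality for each \emph{fixed} $m$ as $n\to\infty$ by quoting the exact expansion of Corollary 6.4 of \cite{BringmannKim},
\begin{equation*}
u(m,n)=\frac{\pi^2}{2} X_3(n)+ \frac{\pi^3}{3} X_4(n)+ \frac{\pi^4}{72} \left( 59-36m^2 \right) X_5(n) + O_m \left( n^{-3} e^{2\pi \sqrt{\frac n3}}\right),
\end{equation*}
from which $u(m,n)^2-u(m-1,n)u(m+1,n)=\frac{\pi^6}{2}X_3(n)X_5(n)+O_m\bigl(n^{-\frac{19}{4}}e^{4\pi\sqrt{n/3}}\bigr)>0$ for $n$ large depending on $m$; since the error term is not uniform in $m$, this route cannot reach the full conjecture. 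Against that backdrop, several of your observations are correct and genuinely useful: the exact formula you extract from \eqref{E:Ugen} is right; the predicted profile $u(m,n)\approx \frac{\pi u(n)}{4\sqrt{3n}}\operatorname{sech}^2\bigl(\frac{\pi m}{2\sqrt{3n}}\bigr)$ is consistent with the logistic limit law of Proposition \ref{prop:URankDistribution} \ref{prop:URankDistribution:unimodal}; and your boundary counterexample is genuine --- indeed $u(n-1,n)=u(n-2,n)=1$ and $u(n-3,n)=3$, so log-concavity really does fail at $m=n-2$, which explains why the conjecture carries the restriction $|m|\le n-23$.

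As a proof, however, your proposal has gaps at exactly the points where all of the difficulty lives, and you have (honestly) flagged rather than filled them. First, the bulk step requires a \emph{local} limit theorem: an asymptotic for $u(m,n)$ itself, uniform in $m$, with error small enough to survive a second difference of logarithms. Since the second difference of the main term is of size $\operatorname{sech}^2\bigl(\frac{\pi m}{2\sqrt{3n}}\bigr)/n$, you need relative errors $o(1/n)$ uniformly for $|m|\ll\sqrt{n}$, and \emph{exponentially small} relative errors once $|m|$ grows past $\sqrt{n}\log n$; nothing in the paper's moment-based machinery (which only yields the integrated distribution in Proposition \ref{prop:URankDistribution}), nor in the non-uniform Bringmann--Kim expansion, supplies such an expansion, and producing one is essentially a hard open problem of the same order as the conjecture itself. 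Second, the tail regime and the identification of the precise threshold $23$ are described only as intentions (``develop asymptotics or explicit monotone bounds''), with no estimates actually carried out and no mechanism given for matching the two regimes. Third, the finite verification is invoked without any computable $N_0$. So what you have written is a sensible and well-motivated research program --- strictly more ambitious than anything the paper attempts, and aimed at the right statement --- but it is not a proof; both in the paper and in your proposal, the conjecture remains open.
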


To see that the conjecture is reasonable, we now show that it holds for $n$ sufficiently large (depending on $m$).
\begin{lemma}\label{lem:LogConcave}
	For $n\to\infty$, Conjecture \ref{ConjectureLogConcave} is true.
\end{lemma}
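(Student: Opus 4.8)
The plan is to establish an asymptotic formula for $u(m,n)$ that is uniform in $m$ over a suitable range, and then verify the log-concavity inequality directly from the resulting closed form. The proposition \ref{prop:URankDistribution:unimodal} already tells us that the normalized rank follows a logistic distribution, and the logistic density is strictly log-concave. This strongly suggests that the right approach is to obtain a local limit theorem, i.e.\ a uniform asymptotic for the individual coefficients $u(m,n)$ rather than just for the cumulative counts or the moments, and then read off log-concavity from the log-concavity of the limiting logistic density $\frac{\pi e^{-\pi x}}{(1+e^{-\pi x})^2}$ evaluated at $x = \frac{m}{\sqrt{3n}}$.

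First I would extract $u(m,n)$ as a contour integral. Writing $\zeta = e^z$, the coefficient $u(m,n)$ is the Fourier coefficient in the $\zeta$-variable and the $q$-coefficient in $U(\zeta;q)$, so a Cauchy-integral representation gives $u(m,n) = \frac{1}{2\pi i}\oint q^{-n-1} \left(\frac{1}{2\pi}\int_{-\pi}^{\pi} U(e^{i\phi};q)e^{-im\phi}\,d\phi\right)dq$. The dominant contribution comes from the decomposition \eqref{E:Ugen}, where the crank factor $C(\zeta;q) = \frac{(q)_\infty}{(\zeta q,\zeta^{-1}q)_\infty}$ carries the main asymptotic weight; the term $H_1(\zeta;q)$ is lower order, as was already exploited in the proof of Theorem \ref{thm:AsymptoticForMoments}. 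Using \eqref{Pochas} together with the modular transformation of the $\eta$-function, I would perform a saddle-point analysis in the $q$-variable (setting $q = e^{-w}$ with $w$ near the real saddle $w_0 \sim \pi/\sqrt{3n}$) and a separate stationary-phase or Gaussian-type analysis in the $\phi$-variable. The heuristic from the logistic limit predicts
\begin{align*}
u(m,n) \sim \frac{\pi}{2\sqrt{3n}}\cdot\frac{e^{-\frac{\pi m}{\sqrt{3n}}}}{\left(1+e^{-\frac{\pi m}{\sqrt{3n}}}\right)^2}\cdot u(n),
\end{align*}
uniformly for $m = o(n^{3/4})$ or so, where $u(n) \sim 8^{-1}3^{-3/4}n^{-5/4}e^{2\pi\sqrt{n/3}}$ is the total count.

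Once such a uniform local asymptotic is in hand, the log-concavity inequality $u(m,n)^2 > u(m-1,n)u(m+1,n)$ reduces to the corresponding inequality for the logistic density. Concretely, writing $g(x) = \frac{e^{-\pi x}}{(1+e^{-\pi x})^2}$, one checks that $\log g$ is strictly concave (indeed $(\log g)''(x) = -\frac{2\pi^2 e^{-\pi x}}{(1+e^{-\pi x})^2} < 0$), so the discrete second difference of $\log u(m,n)$ at step size $\frac{1}{\sqrt{3n}}$ is strictly negative in the limit, which is exactly the asserted inequality. The limiting regime ``for $n \to \infty$'' in the statement lets me hold $m$ fixed (or let it grow slowly) so that all three points $x_{m-1}, x_m, x_{m+1}$ lie in the range of validity of the uniform asymptotic and collapse to the same point of the logistic curve, where the strict concavity gives a clean separation.

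The main obstacle will be controlling the uniformity of the local limit theorem in $m$ with enough precision that the strict inequality survives after dividing by the common dominant factor. The two asymptotics $u(m,n)$, $u(m\pm 1,n)$ agree to leading order, so log-concavity is a statement about the \emph{second-order} behavior in $m$; I must therefore track the error terms in the saddle-point expansion carefully and show that the genuine negative curvature of $\log g$ dominates the cumulative error across the three coefficients. A secondary technical point is justifying that the $H_1$ contribution and the off-saddle portions of both contours are exponentially (or at least polynomially) negligible relative to the main term uniformly in the relevant $m$-range; this parallels the bound ``$O(w^{-\ell/2})$'' already used for the $H_1$ sum in the proof of Theorem \ref{thm:AsymptoticForMoments}, so those estimates should transfer, but they need to be made uniform in the Fourier variable $\phi$ as well as in $w$.
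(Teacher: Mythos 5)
Your overall strategy (a local limit theorem for $u(m,n)$ in terms of the logistic density, followed by reading off log-concavity from $(\log g)''<0$) is plausible, but as written it has a genuine gap that you yourself identify and then do not close: the entire content of the lemma lives in the error term of the local limit theorem, and you never establish that error term. Quantitatively, with $h=\frac{1}{\sqrt{3n}}$ the log-concavity ``signal'' is
$2\log u(m,n)-\log u(m-1,n)-\log u(m+1,n)\approx -h^2(\log g)''\left(x_m\right)=\frac{\pi^2}{6n}$,
i.e.\ of relative size $\Theta(1/n)$. A uniform local limit theorem of the form $u(m,n)=u(n)\,h\,g(x_m)\left(1+O\!\left(n^{-\delta}\right)\right)$ --- which is what a standard two-variable saddle-point analysis delivers --- is useless here for any $\delta\le 1$, because an additive error of relative size $n^{-\delta}$ can have discrete second differences of the same size, swamping the $\Theta(1/n)$ signal. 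What you actually need is an asymptotic \emph{expansion} whose $m$-dependent correction terms are smooth functions of $x_m$ (so that their second differences pick up an extra factor of $h^2$) down to relative order $o(1/n)$. Asserting the heuristic main term and promising to ``track the error terms carefully'' does not constitute a proof of this.

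The paper avoids all of this by citing an existing refined expansion: Corollary 6.4 of Bringmann--Kim gives, for fixed $m$,
$u(m,n)=\frac{\pi^2}{2}X_3(n)+\frac{\pi^3}{3}X_4(n)+\frac{\pi^4}{72}\left(59-36m^2\right)X_5(n)+O_m\!\left(n^{-3}e^{2\pi\sqrt{n/3}}\right)$,
in which the $m$-dependence is isolated in an explicit polynomial multiplying $X_5(n)$, which is of relative order $1/n$ compared to $X_3(n)$ --- exactly the order at which the signal lives. The second difference $u(m,n)^2-u(m-1,n)u(m+1,n)=\frac{\pi^6}{2}X_3(n)X_5(n)+O_m\bigl(n^{-19/4}e^{4\pi\sqrt{n/3}}\bigr)$ then follows by direct computation, with the main term positive and dominant. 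Note also that the error in that expansion is $O_m(\cdot)$, i.e.\ not uniform in $m$, which is precisely why the lemma is only claimed ``for $n\to\infty$'' with $m$ fixed; your proposal aims at uniformity in $m$, which is strictly harder and not needed. If you want to salvage your approach, the honest fix is to either prove the required second-order expansion yourself (essentially redoing Bringmann--Kim) or to cite it, at which point the detour through the logistic density becomes unnecessary.
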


\begin{proof}
Recall that Corollary 6.4 of \cite{BringmannKim} states that
\begin{align*}
u(m,n)=\frac{\pi^2}{2} X_3(n)+ \frac{\pi^3}{3} X_4(n)+ \frac{\pi^4}{72} \left( 59-36m^2 \right) X_5(n) + O_m \left( n^{-3} e^{2\pi \sqrt{\frac n3}}\right),
\end{align*}
where, with $I_\kappa(x)$ the $I$-Bessel function of order $\kappa$,
\begin{align*}
X_j(n):=\left( 2 \sqrt{3n}\right)^{-j} I_{-j}\left( 2\pi \sqrt{\frac{n}{3}}\right).
\end{align*}
Using that $I_{-j}(x)= (2\pi x)^{-\frac12}e^{x}(1+O(x^{-1}))$ as $x\to\infty$, we obtain that
\begin{gather*}
u(m,n)^2 - u(m-1,n)u(m+1,n)
=
	\frac{\pi^6}{2}X_3(n)X_5(n) + O_m\left( n^{-\frac{19}{4}}e^{4\pi \sqrt{\frac{n}{3}}}\right).
\end{gather*}
This gives the claim.
\end{proof}

We note that similar statements appear to hold for both the rank and crank of ordinary partitions, which we record here for posterity.
\begin{conjecture}
\label{C:NMLogConcave}
The following inequalities hold:
\begin{align*}
N(m,n)^2 &> N(m-1,n)N(m+1,n)& \qquad\qquad&\mbox{for $n\ge123$ and $|m|\le n-72$}
,\\
M(m,n)^2 &> M(m-1,n)M(m+1,n)& \qquad\qquad&\mbox{for $n\ge125$ and $|m|\le n-71$}.
\end{align*}
\end{conjecture}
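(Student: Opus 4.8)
The plan is to treat the rank and the crank in parallel, to split the range of $m$ into a \emph{bulk} regime and a \emph{tail} regime, and to reduce the two explicit thresholds to a finite computation. In the bulk, where $|m| = O(\sqrt{n}\log n)$, I would follow the proof of Lemma \ref{lem:LogConcave}. The input is a Rademacher-type (circle method) expansion for each statistic whose leading term is the same smooth profile for both, namely
\begin{equation*}
N(m,n),\, M(m,n) \;\sim\; \frac{\beta}{4}\,\mathrm{sech}^2\!\left(\tfrac{\beta m}{2}\right) p(n), \qquad \beta := \frac{\pi}{\sqrt{6n}},
\end{equation*}
and one checks that each statistic has second moment $\sim 2n$, using \eqref{E:C2jAsymp} and \eqref{Pochas} on the crank side. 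Since $\mathrm{sech}^2(\beta m/2)$ is strictly log-concave, with $(\log\mathrm{sech}^2(\beta m/2))'' = -\tfrac{\beta^2}{2}\mathrm{sech}^2(\beta m/2)$, the discrete second difference of $\log N(\cdot,n)$ is negative of size $\asymp \beta^2 = \pi^2/(6n)$; the inequality then follows once the error terms are shown, uniformly and with explicit constants, to contribute less than this $1/n$-sized gap, exactly as $\tfrac{\pi^6}{2}X_3(n)X_5(n)$ dominates the error in Lemma \ref{lem:LogConcave}.

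The difficulty is that the restriction $|m|\le n-72$ (resp. $n-71$) forces $m$ to grow \emph{linearly} with $n$, far outside the region where the $\mathrm{sech}^2$ profile is valid; in the deep tail that profile grossly overestimates the true values (it would predict $N(n-1,n)\to\infty$, whereas in fact $N(n-1,n)=1$). The tail regime therefore demands a genuinely different, uniform large-deviation analysis. Here I would recover $N(m,n)$ and $M(m,n)$ from a two-dimensional saddle-point evaluation of the Cauchy integrals against the generating functions $\sum_{m,n}N(m,n)\zeta^m q^n$ and $C(\zeta;q)$ from \eqref{E:crank}, in which the $\zeta$-saddle is no longer near $1$ but is pinned by the ratio $m/n$. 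The target is a uniform asymptotic of the shape $\log N(m,n) = \sqrt{n}\,\Phi_N(m/n) + \cdots$ with a strictly concave rate function $\Phi_N$ (and similarly $\Phi_M$), so that strict log-concavity reduces to strict concavity of the rate function together with control of the subleading corrections.

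The main obstacle is precisely this tail analysis: producing a uniform-in-$m$ asymptotic with an \emph{effective} error term all the way out to $|m| = n - O(1)$, and showing that the concavity of the rate function is strong enough, relative to those corrections, to survive the passage from the continuous profile to the discrete second difference. Matching the bulk and tail estimates on their overlap and bounding the Kloosterman-type arithmetic sums uniformly in $m$ are the delicate points. One must also respect that log-concavity genuinely fails in the excluded extreme tail, for example $N(n-2,n)=0$, which forces $N(n-2,n)^2 < N(n-3,n)N(n-1,n)$, so the margins $72$ and $71$ should be read as the largest $|m|$ for which the uniform asymptotic still dominates. Finally, the explicit thresholds $n\ge123$ and $n\ge125$ would emerge by fixing an effective cutoff $N_0$ above which the analytic argument applies and then checking all pairs $(m,n)$ with $n<N_0$ by direct computation of $N(m,n)$ and $M(m,n)$; sharpening the analytic range so that it meets the computational range exactly at the stated constants is the last and most tedious step.
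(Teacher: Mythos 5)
First, a point of order: the statement you have attempted to prove is Conjecture~\ref{C:NMLogConcave} of the paper. The authors do not prove it; they record it ``for posterity'' alongside Conjecture~\ref{ConjectureLogConcave}, and they explicitly remark that the analytic techniques underlying the log-concavity of $p(n)$ \cite{Nic78, DesalvoPak1, GORZ} ``do not seem to directly apply'' to these rank and crank statistics. So there is no proof in the paper to compare yours against, and your text must be judged as a proposed resolution of an open problem.

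Judged on those terms, it is a sensible research program, but it is not a proof, and the genuine gaps sit exactly where you place them. In the bulk, your claim that the discrete second difference of $\log N(\cdot,n)$ is negative ``of size $\asymp \beta^2 = \pi^2/(6n)$'' is not correct uniformly: by your own formula the second derivative of $\log \operatorname{sech}^2(\beta m/2)$ is $-\tfrac{\beta^2}{2}\operatorname{sech}^2(\beta m/2)$, which decays exponentially in $\beta|m|$, so at the edge of your bulk region $|m| \asymp \sqrt{n}\log n$ the log-concavity gap is polynomially smaller than $1/n$, and all error terms must be beaten to that shrinking precision uniformly in $m$. This is strictly stronger than what Lemma~\ref{lem:LogConcave} uses: the expansion of \cite{BringmannKim} invoked there carries an error $O_m\left(n^{-3}e^{2\pi\sqrt{n/3}}\right)$ whose implied constant depends on $m$, which is precisely why that lemma is only a statement at fixed $m$ as $n\to\infty$; no uniform, effective version is known, and you do not supply one. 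In the tail, the ``strictly concave rate function $\Phi_N$'' is a target rather than a result: no uniform large-deviation asymptotic for $N(m,n)$ or $M(m,n)$ valid out to $|m| = n - O(1)$ with explicit error terms appears in your sketch (or, to my knowledge, in the literature), and your own correct observation that $N(n-2,n)=0$ while $N(n-3,n)=N(n-1,n)=1$ shows log-concavity genuinely fails in the extreme tail, so this analysis cannot be bypassed — it is what must locate the thresholds $|m|\le n-72$ and $|m|\le n-71$. Until both pieces are made effective, the finite verification for small $n$ cannot even be formulated, and the statement remains what the paper says it is: a conjecture.
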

There are related results for the partition function $p(n)$, which is known to be log-concave for $n > 25$; this was originally proven by Nicolas \cite{Nic78} (also see \cite{DesalvoPak1}). The proof relies on certain analytic properties of the asymptotic growth of $p(n)$, and the recent work of Griffin-Ono-Rolen-Zagier \cite{GORZ} on the hyperbolicity of polynomials associated to real sequences shows that a more general phenomenon holds for a wide class of sequences. Unfortunately, these analytic techniques do not seem to directly apply to the statistics in Conjectures \ref{ConjectureLogConcave} and \ref{C:NMLogConcave}.

\section{Durfee Unimodal Sequences}\label{durfee}

In this section we consider Durfee unimodal sequences, which turn out to have many similarities to unrestricted unimodal sequences. We begin by proving the asymptotic formulas for the moments of the Durfee unimodal rank.

\begin{proof}[Proof of Theorem \ref{thm:AsymptoticForMoments} {\rm (2)}]

As with $u_{2k}(m)$, we see that $v_{2k}(n)\le v_{2k}(n+1)$, by adding one to the peak.
We therefore again look to apply Theorem \ref{Taub},
by determining the asymptotic main term of $\mathbb{V}_{2k}(e^{-w})$ as $w\rightarrow0$
with $|\Arg(w)|\le \theta<\frac{\pi}{2}$.

For this, we use the following identity from Proposition 3.1 of \cite{KimLovejoy2}, for
$V(\zeta;q)$:
\begin{align*}
V(\zeta;q)
&=
C\left(\zeta;q\right) G_2(\zeta;q) + H_2(\zeta;q),
\end{align*}
where
\begin{align*}
G_2(\zeta;q)
:=
	\frac{1}{(q)_\infty}
	\sum_{n\ge0} \zeta^{3n+1} q^{3n^2+2n} \left(1-\zeta q^{2n+1}\right)
,
\qquad\qquad
H_2(\zeta;q)
:=
	(1-\zeta)\sum_{n\ge0} \zeta^{n} q^{n^2+n}
.
\end{align*}

Since $\mathbb{V}_{2k}(q) = [ \partial_z^{2k}( V(\zeta;q))]_{z=0}$,
we compute
\begin{align*}
\left[ \partial_z^\ell (G_2(\zeta;q)) \right]_{z=0}
&=
	\frac{1}{(q)_\infty}
	\sum_{n\ge0} \left( (3n+1)^{\ell} - (3n+2)^\ell q^{2n+1} \right) q^{3n^2+2n}
,\\
\left[ \partial_z^\ell (H_2(\zeta;q)) \right]_{z=0}
&=
	\sum_{n\ge0}
	\left( n^{\ell} - (n+1)^{\ell} \right) q^{n^2+n}	
.
\end{align*}	
Thus
\begin{align}\label{Eq:VMomentsReadyForAsymptotics}
\mathbb{V}_{2k}(q)
&=
	\left[ \partial_z^{2k}\left( C(\zeta;q)G_2(\zeta;q)+H_2(\zeta;q) \right)\right]_{z=0}
\nonumber\\
&=
	\frac{1}{(q)_\infty}
	\sum_{j=0}^{k} \binom{2k}{2j} C_{2j}(q)
	\sum_{n\ge0} \left( (3n+1)^{2(k-j)} - (3n+2)^{2(k-j)}q^{2n+1} \right)q^{3n^2+2n}
	\nonumber\\&\quad
	+
	\sum_{n\ge0} \left( n^{2k} - (n+1)^{2k}\right) q^{n^2+n}
,
\end{align}
where we again use that the odd moments of the crank generating function
are zero.
	
We now determine the asymptotics of the individual components. Proposition \ref{Prop:EulerMaclaurin1DShifted} implies that the second term is $O(w^{- \frac \ell2})$ for some $\ell \in \Z$ as $w \to 0$. For the first term we first determine, using Proposition \ref{Prop:EulerMaclaurin1DShifted} the asymptotic behavior of
\begin{align*}
\sum_{n\ge0} \left(f_\ell\left(\sqrt{w}\left( n+\frac13  \right)\right) - f_\ell\left(\sqrt{w}\left( n+\frac23  \right)\right)\right)\sim
-\frac{2 B_{2\ell+1}\left(\frac13\right)}{2\ell+1} w^{\ell}
,
\end{align*}
where $f_\ell(w) := w^{2\ell} e^{-3w^2}$.
Combining this with  \eqref{Pochas} and \eqref{E:C2jAsymp}
gives that the first summand
in \eqref{Eq:VMomentsReadyForAsymptotics}
is asymptotically  equal to
\begin{align*}
	-2 e^{\frac{\pi^2}{3w}}
	\sum_{j=0}^{k} \binom{2k}{2j} (-1)^j B_{2j}\left(\frac 12 \right)
	3^{2(k-j)} \frac{B_{2(k-j)+1}\left( \frac 13\right) } {2(k-j)+1} \left( \frac{w}{2\pi}\right)^{1 -2j}
.
\end{align*}
The $j=k$ term is dominant giving
\begin{gather*}
\frac13 (-1)^k B_{2k}\left(\frac12\right) \left(\frac{w}{2\pi}\right)^{1-2k} e^{\frac{\pi^2}{3w}}.
\end{gather*}

Applying Theorem \ref{Taub} we then obtain the claim.
\end{proof}

We conclude our discussion of Durfee unimodal sequences by noting that the proofs of Proposition \ref{prop:URankDistribution} \ref{prop:URankDistribution:Durfee} and Corollary \ref{cor:AbsoluteMoments} \ref{cor:AbsoluteMoments:Durfee} are essentially identical to the corresponding proofs for unimodal sequences from Section \ref{Sec:unimodal}.

\section{Semi-Strict Unimodal Sequences}\label{semi}

In this section, we investigate semi-strict unimodal sequences. We begin by proving Theorem \ref{thm:AsymptoticForMoments} {\rm (3)}.

\begin{proof}[Proof of Theorem \ref{thm:AsymptoticForMoments} {\rm (3)}]

Since the corresponding rank is monotone in $n$, i.e., ${\rm dm}(m,n)\le {\rm dm}(m,n+1)$ (again by adding one to the peak),
we can again apply Theorem \ref{Taub} to determine the asymptotics of the moments.

For this, we need to determine
\begin{equation*}
\lim_{w\to 0} \mathbb{D}_k\left(e^{-w}\right).
\end{equation*}
 Letting $x=q$, $\beta=-\zeta^{-1}q$, and $\gamma=\zeta q$ in
\cite[equation (4.1)]{AndrewsSubbaraoVidyasagar1} yields\begin{align*}
D(\zeta;q)
&=
D^*(\zeta;q)
+
\frac{q(1-\zeta^{-1})}{1+\zeta^{-2}q}
,
\end{align*}
where
\begin{equation*}
D^*(\zeta;q) := \frac{q\left(-\zeta^{-1}q\right)_\infty}{\zeta\left(1+\zeta^{-2}q\right)(\zeta q)_\infty}.
\end{equation*}
For $k=0$, we obtain, using \eqref{Pochas},
\begin{align*}
\label{E:D0Asymp}
\mathbb{D}_0\left(e^{-w}\right)=\frac14 \sqrt{\frac{w}{\pi}} e^{\frac{\pi^2}{4w}}.
\end{align*}

We next turn to higher $k$.
We compute the logarithmic derivative of $D^*$ as
\begin{equation}\label{L}
\frac{\partial_z\left(D^*(\zeta;q)\right)}{D^*(\zeta;q)}= -1 + \frac{2\zeta^{-2}q}{1+\zeta^{-2}q} - \sum_{n\geq 1} \frac{\zeta^{-1} q^n}{1+\zeta^{-1}q^n}+ \sum_{n\geq 1} \frac{\zeta q^n}{1-\zeta q^n} =: L(\zeta;q).
\end{equation}

We first consider the third term and set
\begin{equation*}
L_1(\zeta;q) := \sum_{n\geq1} \frac{\zeta^{-1} q^n}{1+\zeta^{-1} q^n} = -\sum_{n_1,n_2\geq 1}\left(-\zeta^{-1}q^{n_1}\right)^{n_2},
\end{equation*}
which is valid for $|q| < |\zeta|$ (in fact, we set $\zeta$ to be $1$ below).
In order to calculate the moments, we need the following derivatives for $\ell\in\N_0$:
\begin{align*}
L_{1,\ell}(q)
:=
	\left[ \partial_z^\ell\left( L_1(\zeta;q)\right)\right]_{z=0}
=
	(-1)^{\ell+1} \sum_{n_1,n_2\geq 1} n_2^\ell (-1)^{n_2} q^{n_1n_2}
=
	(-1)^{\ell+1} \sum_{n_2\geq1} \frac{n_2^{\ell}(-1)^{n_2}q^{n_2}}{1-q^{n_2}}.
\end{align*}

We next determine the asymptotic behavior of $L_{1,\ell}(e^{-w})$ as $w\to 0$ with
$|\Arg(w)|\le \theta<\frac{\pi}{2}$. The case $\ell=0$ is combined below with the fourth term. For $\ell\geq 1$ we write
\begin{align*}
L_{1,\ell}\left(e^{-w}\right)
=
	(-1)^{\ell}  w^{-\ell} \sum_{n\geq 0} (-1)^n f_\ell(w(n+1))
,
\end{align*}
where
\begin{equation*}
f_\ell(w) := \frac{w^\ell e^{-w}}{1-e^{-w}}.
\end{equation*}
Since $\ell\geq 1$, $f_\ell(w)$ does not have a pole at $w=0$ and we may apply Corollary \ref{Cor:EulerMaclaurinBooleShift} to obtain
\begin{equation*}
\sum_{n\geq 0} (-1)^n f_\ell(w(n+1))
= \frac12 \sum_{n=0}^{N-1} \frac{E_{n}(1) f_\ell^{(n)}(0)}{n!}  w^n + O\left(w^N\right).
\end{equation*}
Now $f^{(n)}(0)=0$ for $n<\ell-1$.
Thus
\begin{gather*}
\sum_{n\geq 0} (-1)^n f_\ell(w(n+1))
\ll
	w^{\ell-1},
\end{gather*}
which implies that
\begin{align*}
L_{1,\ell}\left(e^{-w}\right)
\ll
\frac{1}{w}.
\end{align*}

We next consider the fourth term in \eqref{L}, which we denote by
\begin{equation*}
L_2(\zeta;q) := \sum_{n\geq 1} \frac{\zeta q^n}{1-\zeta q^n}.
\end{equation*}
Proceeding as for $L_1$ we have for $\ell\in\N_0$
\begin{align*}
L_{2,\ell}(q)
:=
	\left[\partial_z^\ell\left(L_2(\zeta;q)\right)\right]_{z=0}
=
	\sum_{n\geq 1} \frac{n^\ell q^{n}}{1-q^{n}}.
\end{align*}
For $\ell\ge1$, we have by \cite[equation (6.80)]{Zagier1}, after correcting minor typos, that
\begin{gather}\label{AsL2}
L_{2,\ell}(e^{-w})
\sim
	\frac{\ell! \zeta(\ell+1)}{w^{\ell+1}}
.
\end{gather}

We now consider the $\ell=0$ cases of $L_{1;\ell}$ and $L_{2,\ell}$. For this, we note that
\begin{gather*}
L(1;q)
=
	-1 + \frac{2q}{1+q} - \sum_{n\ge1} \frac{q^n}{1+q^n} + \sum_{n\ge1} \frac{q^n}{1-q^n}
=
	-1 + \frac{2q}{1+q} + 2\sum_{n\ge1} \frac{q^{2n}}{1-q^{2n}}.
\end{gather*}
To determine the asymptotic behavior of $L(1;q)$, we write
\begin{gather*}
\sum_{n\ge1} \frac{q^{2n}}{1-q^{2n}}
=
\sum_{n\ge0} f(w(n+1)),
\end{gather*}
where
\begin{gather*}
f(w)
:=
	\frac{e^{-2w}}{1-e^{-2w}}
=
	\frac{1}{2w} - \frac12 + O(w).
\end{gather*}
By Proposition \ref{Prop:EulerMaclaurin1DPoleShifted},
we have
\begin{gather}\label{Asel}
L(1;e^{-w})
\sim
-\frac{\Log(w)}{w}.
\end{gather}

Recalling \eqref{L}, we have that for $k \in \N$,
\begin{align*}
\partial_z^{k}\left( D^*(\zeta;q)\right)
&=
	\sum_{j=0}^{k-1} \binom{k-1}{j} \partial_z^j\left( D^*(\zeta;q)\right) \partial_z^{k-1-j}\left(L(\zeta;q)\right).
\end{align*}
By induction, this implies that
\begin{gather*}
\partial_z^k\left( D^*(\zeta;q)\right)
=
	D^*(\zeta;q)
	\sum_{\ell_1 + 2\ell_2 + \dotsb + k\ell_k=k}	
	a(\ell_1, \ell_2, \dotsc, \ell_k) \prod_{n=0}^{k-1} \left( \partial_z^n\left( L(\zeta;q) \right) \right)^{\ell_{n+1}}	
,
\end{gather*}
where $\ell_h\in\N_0$ and the
$a(\ell_1, \ell_2, \dotsc, \ell_k)$ are constants.
Now for a given sequence of non-negative integers with
$\ell_1 + 2\ell_2 + \dotsb + k\ell_k=k$,
we have, using \eqref{Asel} and \eqref{AsL2}
\begin{align*}
\left[\prod_{n=0}^{k-1} \left(\partial_z^n\left( L(\zeta;e^{-w})\right)  \right)^{\ell_{n+1}}\right]_{z=0}
&\sim
	\left( \frac{\Log\left(\frac{1}{w}\right)}{w}  \right)^{\ell_1}
	\prod_{n=1}^{k-1} \left( \frac{n!\zeta(n+1)}{w^{n+1}}  \right)^{\ell_{n+1}}
\\
&=
 	\Log\left(\frac{1}{w}\right)^{\ell_1} w^{-k}
	\prod_{n=1}^{k-1} \left( n!\zeta(n+1)  \right)^{\ell_{n+1}}
.
\end{align*}
Since this is largest for $\ell_1=k$, and clearly $a(k,0,\dotsc,0)=1$, we have, using \eqref{Pochas}
\begin{align*}
\mathbb{D}_k(e^{-w})
&\sim
	\left[ \partial_z^k\left( D^*(\zeta;e^{-w})\right) \right]_{z=0}
\sim
	D^*(1;e^{-w}) \Log\left(\frac{1}{w}\right)^k w^{-k}
\sim
	\frac{1}{4\sqrt{\pi}} \Log\left(\frac{1}{w}\right)^k w^{\frac12-k}
	e^{\frac{\pi ^2}{4w}}
.
\end{align*}

Applying Theorem \ref{Taub} then yields  the claim.
\end{proof}

\begin{proof}[Proof of Proposition \ref{prop:URankDistribution} \ref{prop:URankDistribution:semistrict} and Corollary \ref{cor:AbsoluteMoments} \ref{cor:AbsoluteMoments:semistrict}]
Plugging in $k=0$ to Theorem \ref{thm:AsymptoticForMoments} \ref{thm:Asymp:semistrict}, we find that
\begin{equation*}
{\rm dm}(n) \sim \frac{1}{16 n} e^{\pi \sqrt{n}}.
\end{equation*}
Note that this formula also appeared as Theorem 1.3 of \cite{BringmannMahlburg2}. We therefore have
\begin{equation*}
\frac{{\rm dm}_k(n)}{{\rm dm}(n)} \sim \frac{\log(n)^k n^{\frac{k}{2}}}{\pi^k},
\end{equation*}
and thus the normalized ratio of moments is
\begin{equation*}
\frac{{\rm dm}_k(n)}{\left(\frac{\sqrt{n} \log(n)}{\pi}\right)^k{\rm dm}(n)} \sim 1.
\end{equation*}
However, the only distribution whose moments are identically $1$ comes from the point mass probability function that satisfies $p(x = 1) = 1$, with $p(x = a) = 0$ for all $a \neq 1$.

This immediately implies both the proposition and corollary statements (for the latter, simply note that there is no difference between the absolute moments and the moments for the point mass distribution).
\end{proof}

\end{document}